\newtheorem{theorem}{Theorem}
 \newtheorem{assumption}{Assumption}
\newtheorem{lemma}[theorem]{Lemma}
\newtheorem{definition}{Definition}[section]
\title{Learning Mean-Field Games}
\author{Xin Guo\\
University of California, Berkeley\\
\texttt{xinguo@berkeley.edu}\\
\And
Anran Hu\\
University of California, Berkeley\\
  \texttt{anran\_hu@berkeley.edu}\\
\And
Renyuan Xu\\
University of California, Berkeley\\
  \texttt{renyuanxu@berkeley.edu}\\
\And
Junzi Zhang\\
Stanford University \\
\texttt{junziz@stanford.edu}
}
\begin{document}

\date{}
\maketitle

\begin{abstract}
This paper presents a general mean-field game (GMFG) framework for simultaneous learning and decision-making in stochastic games with a large population. It first establishes the existence of a unique Nash Equilibrium  to this GMFG, and explains that naively combining 
Q-learning with the fixed-point approach in classical MFGs yields unstable algorithms. It then proposes a Q-learning algorithm with Boltzmann policy (GMF-Q), with analysis of convergence property  and computational  complexity. 
The experiments  on repeated Ad auction problems  demonstrate that this GMF-Q algorithm is
 efficient and robust in terms of convergence and learning accuracy. Moreover, its performance is superior in convergence, stability, and learning ability, when compared with existing algorithms for multi-agent reinforcement learning.
 \end{abstract}
\section{Introduction}
\label{introduction}

\paragraph{Motivating example.} This paper is motivated by the following  Ad auction problem for an advertiser. An Ad auction is a stochastic game on an Ad exchange platform among a large number of players, the advertisers. 
In between the time a web user requests a page and the time the page is displayed, usually within a millisecond,  a Vickrey-type of second-best-price  auction is run to incentivize 
interested advertisers  to bid for an Ad slot to display advertisement.
Each advertiser has limited information before each bid:  first, her own {\it valuation} for a slot depends on an unknown conversion of clicks for the item; secondly, she, should she win the bid,  only knows the reward {\textit{after}} the user's activities on the website are finished. In addition, she has a budget constraint in this repeated auction. 

The question is, how should she bid in this online sequential repeated game when there is a {\textit{large}} population of bidders competing on the Ad platform, with {\textit{unknown}} distributions of the conversion of clicks and rewards? 

Besides the Ad auction, there are many real-world problems involving a large number of players and unknown systems.  Examples include massive multi-player online role-playing games \cite{MMORPG}, high frequency tradings \cite{algo_trade_mfg}, and the sharing economy \cite{sharing_eco}.

\paragraph{Our work.}

Motivated by these problems, we consider a general framework  of  simultaneous learning and decision-making in stochastic games with a large population. We formulate a general mean-field-game (GMFG) with incorporation of   action distributions, (randomized) relaxed policies, and with unknown rewards and dynamics.  This {\color{black}general} framework can also be viewed as a generalized version of MFGs of McKean-Vlasov type \cite{MKV}, which is a different paradigm from the classical MFG. It is also beyond the scope of the existing Q-learning framework for Markov decision problem (MDP) with unknown distributions, as MDP is technically equivalent to a single player stochastic game.

On the theory front, this {\color{black}general} framework differs from all existing MFGs. We establish under appropriate technical conditions,  the existence and uniqueness of the 
Nash equilibrium (NE)  to this GMFG.  On the computational front, we show  that naively combining 
Q-learning with the three-step fixed-point approach in classical MFGs yields unstable algorithms. We then propose a Q-learning algorithm with Boltzmann policy (GMF-Q), establish its convergence property and analyze its computational complexity.
Finally, we apply this GMF-Q algorithm to  the Ad auction problem, where
this GMF-Q algorithm demonstrates its
 efficiency and robustness in terms of convergence and learning. Moreover, 
 its performance is superior, when compared with existing algorithms for multi-agent reinforcement learning for convergence, stability, and learning accuracy.

\paragraph{Related works.} On learning large population games with mean-field approximations, \cite{YYTXZ2017} focuses on inverse reinforcement learning  for MFGs without decision making, \cite{YLLZZW2018} studies an MARL problem with a {\color{black}first-order} mean-field approximation term modeling the interaction between one player and all the other finite players, and \cite{KC2013} and \cite{YMMS2014} consider model-based adaptive learning for MFGs {\color{black}in specific models (\textit{e.g.}, linear-quadratic and oscillator games)}. {\color{black}More recently, \cite{Manymany} studies the local convergence of actor-critic algorithms on finite time horizon MFGs, and \cite{rl_mfg_local} proposes a policy-gradient based algorithm and analyzes the so-called local NE for reinforcement learning in infinite time horizon MFGs.}  For learning large population games without mean-field approximation, see \cite{MARL_literature2, MARL_literature1} and the references therein.  
In the specific topic of learning auctions with a large number of advertisers, \cite{CRZMWYG2017} and \cite{JSLGWZ2018} explore reinforcement learning techniques to search for social optimal solutions with real-word data, and \cite{IJS2011} uses MFGs to model the auction system with unknown conversion of clicks within a Bayesian framework.


{\color{black}However, none of these works consider the problem of simultaneous learning and decision-making in a general MFG framework. Neither do they establish the existence and uniqueness of  the {\color{black}(global)} NE, nor do they present  model-free learning algorithms with complexity analysis and  convergence to the NE.} Note that in principle, global results are harder to obtain compared to local results.


\section{Framework of General MFG (GMFG)}\label{n2mfg}

\subsection{Background: classical $N$-player Markovian game and MFG}\label{classical}
Let us first recall the classical $N$-player game. There are $N$ players in a game.   
 At each step $t$, the state of  player $i   \ \ (=1, 2, \cdots, N)$ is  $s^i_t\in\mathcal{S}\subseteq\mathbb{R}^d$ and she takes an action $a^i_t\in\mathcal{A}\subseteq\mathbb{R}^p$. Here $d, p$ are positive integers, and $\mathcal{S}$ and  $\mathcal{A}$ are  compact (for example,  finite) state space and action space, respectively. 
 Given the current state profile of $N$-players ${\bf s}_t=(s^1_t,\dots,s^N_t)\in\mathcal{S}^N$ and the action $a^i_t$,   player $i$ will receive a reward $r^i({\bf s}_t, a^i_t)$ and her state will change to $s^i_{t+1}$ according to a transition probability function $P^i({\bf s}_t, a^i_t)$.

 A Markovian game further restricts the admissible policy/control for player $i$ to be of the form
 $a^i_t=\pi^i_t({\bf s}_t)$. That is,  $\pi^i_t:\mathcal{S}^N\rightarrow \mathcal{P}(\mathcal{A})$  maps each state profile ${\bf s}\in\mathcal{S}^N$ to a randomized action, with {\color{black}$\mathcal{P}(\mathcal{X})$  the space of probability measures on space $\mathcal{X}$}. 
The accumulated reward (a.k.a. the value function) for player $i$, given the initial state profile ${\bf s}$ and the policy profile sequence $\pmb{\pi} :=\{\pmb{\pi}_t\}_{t=0}^{\infty} $ with $\pmb{\pi}_t=(\pi^1_t,\dots,\pi^N_t)$, is then defined as
\begin{eqnarray}\label{game}
V^i({\bf s},\pmb{\pi}):=\mathbb{E}\left[\sum_{t=0}^{\infty}\gamma^t r^i({\bf s}_t,a^i_t)\Big| {\bf s}_0={\bf s}\right],
\end{eqnarray}
where $\gamma\in(0,1)$ is the discount factor,  $a^i_t\sim \pi^i_t({\bf s}^t)$, and $s^i_{t+1}\sim P^i({\bf s}_t, a_t^i)$.
The goal of each player is to maximize her value function over all admissible policy sequences.   

In general, this type of stochastic $N$-player game is notoriously hard to analyze, especially when $N$  is large {\color{black}\cite{PR05}}. Mean field game (MFG),  pioneered by \cite{HMC2006} and \cite{LL2007} {\color{black}in the continuous settings and later developed  in \cite{MFG_n_conv, MFG_gomes, MFG_binact, MFG_discrete_time, MFG_discrete_time2} for discrete settings}, provides an ingenious and  tractable aggregation approach to approximate the otherwise challenging $N$-player stochastic games. 
The basic idea for an MFG goes as follows. Assume all players are  identical, indistinguishable and interchangeable, when $N\to \infty$, one can view the limit of other players' states ${\bf s}_t^{-i}=(s_t^1,\dots,s_t^{i-1},s_t^{i+1},\dots,s_t^N)$ as a population state distribution {\color{black}$\mu_t$ with $\mu_t(s):=\lim_{N \rightarrow \infty}\frac{\sum_{j=1, j\neq i}^N \textbf{I}_{s_t^j=s}}{N}$}.\footnote{{\color{black}Here the indicator function $\textbf{I}_{s_t^j=s}=1$ if $s_t^j=s$ and $0$ otherwise.}} Due to the homogeneity of the players, one can then focus on a single (representative) player. That is, in an MFG, one may consider   instead   the following optimization problem,
\[
\begin{array}{ll}
\text{maximize}_{\pmb{\pi}} & V(s,\pmb{\pi},\pmb{\mu}):=\mathbb{E}\left[\sum\limits_{t=0}^\infty \gamma^t r(s_t,a_t,\mu_t)|s_0=s\right]\\
\text{subject to} & s_{t+1}\sim P(s_t,a_t,\mu_t), \quad a_t\sim \pi_t(s_t,\mu_t),
\end{array}
\]
where $\pmb{\pi}:={\{\pi_t\}_{t=0}^{\infty}}$ denotes the policy sequence and $\pmb{\mu} := \{\mu_t\}_{t=0}^\infty$  the distribution flow.
In this MFG setting, at time $t$, after the representative player chooses her action $a_t$ according to some policy $\pi_t$, she will receive reward $r(s_t,a_t,\mu_t)$ and her state will evolve under a \textit{controlled stochastic dynamics} of a mean-field type $P(\cdot|s_t,a_t,\mu_t)$.
Here the policy $\pi_t$ depends on both the current state $s_t$ and the current population state distribution $\mu_t$ such that $\pi:\mathcal{S}\times \mathcal{P}(\mathcal{S})\rightarrow \mathcal{P}(\mathcal{A})$.

\subsection{General MFG (GMFG)}\label{mfg-set-up}
In the classical MFG setting, the reward and the dynamic for each player are known. They depend only on $s_t$ the state of the player,  $a_t$ the action of this particular player, and $\mu_t$ the population state distribution. In contrast, in the motivating auction example, the reward and the dynamic  are unknown; they  rely on the actions of {\it all} players,  as well as on $s_t$ and $\mu_t$.

We therefore define the following  general MFG (GMFG) framework.  At time $t$, after the representative player chooses her action $a_t$ according to some policy  $\pi:\mathcal{S}\times \mathcal{P}(\mathcal{S})\rightarrow \mathcal{P}(\mathcal{A})$, she will receive a reward $r(s_t,a_t,\mathcal{L}_t)$ and her state will evolve according to $P(\cdot|s_t,a_t,\mathcal{L}_t)$, where $r$ and $P$ are possibly unknown. The objective of the player is to solve the following control problem:
\begin{equation}\label{mfg}
\begin{array}{ll}
\text{maximize}_{\pmb{\pi}} & V(s,\pmb{\pi},\pmb{\mathcal{L}}):=\mathbb{E}\left[\sum\limits_{t=0}^\infty \gamma^t r(s_t,a_t,{\color{black}\mathcal{L}_t})|s_0=s\right]\\
\text{subject to} & s_{t+1}\sim P(s_t,a_t,{\color{black}\mathcal{L}_t}),\quad a_t\sim \pi_t(s_t,\mu_t).
\end{array}\tag{GMFG}
\end{equation}
Here, $\pmb{\mathcal{L}}:=\{\mathcal{L}_t\}_{t=0}^{\infty}$, with
$\mathcal{L}_t=\mathbb{P}_{s_t,a_t}\in \mathcal{P}(\mathcal{S}\times \mathcal{A})$  the joint distribution  of the state and the action (\textit{i.e.}, the \text{population state-action pair}). $\mathcal {L}_t$  has  marginal distributions $\alpha_t$ for the population action  and $\mu_t$ for the population state. Notice that $\{\mathcal{L}_t\}_{t=0}^{\infty}$ could depend on time. Namely, an infinite time horizon MFG could still have time-dependent NE solution due to the mean information process (game interaction) in the MFG. This is fundamentally different from the theory of single-agent MDP where the optimal control, if exists uniquely, would be time independent in an infinite time horizon setting.

In this framework, we adopt the well-known Nash Equilibrium (NE) for analyzing stochastic games.

\begin{definition}[NE for GMFGs]\label{nash2} 
In \eqref{mfg}, a player-population profile $(\pmb{\pi}^{\star},\pmb{\mathcal{L}}^{\star}):=(\{\pi_t^\star\}_{t=0}^{\infty},\{\mathcal{L}_t^\star\}_{t=0}^{\infty})$
  is called an NE if 
\begin{enumerate}
    \item (Single player side) Fix $\pmb{\mathcal{L}}^{\star}$, for any policy sequence $\pmb{\pi}:=\{\pi_t\}_{t=0}^{\infty}$ and initial state $s\in \mathcal{S}$, 
\begin{equation}
V\left(s,\pmb{\pi}^{\star},\pmb{\mathcal{L}}^{\star}\right)\geq V\left(s,\pmb{\pi},\pmb{\mathcal{L}}^{\star}\right).
\end{equation}
\item  (Population side) $\mathbb{P}_{s_t,a_t}= {\mathcal{L}_t^{\star}}$ for all $t\geq 0$, where $\{s_t,a_t\}_{t=0}^{\infty}$ is the dynamics under the policy sequence $\pmb{\pi}^{\star}$ starting from $s_0 \sim \mu_0^{\star}$, with $a_t\sim\pi_t^\star(s_t,{\color{black}\mu_t^{\star}})$, $s_{t+1}\sim P(\cdot|s_t,a_t,{\color{black}\mathcal{L}_t^\star})$, and $\mu_t^{\star}$ being the population state marginal of $\mathcal{L}_t^\star$.
\end{enumerate}
\end{definition}
The single player side condition captures the optimality of $\pmb{\pi}^\star$, when the population side is fixed. The population side condition ensures the ``consistency'' of the solution: it guarantees that the state and action distribution flow of the single player does match the population state and action sequence $\pmb{\mathcal{L}}^{\star}$.

\subsection{Example: GMFG for the repeated auction} 
\label{section:example}
Now, consider the repeated Vickrey  auction  with a budget constraint in Section \ref{introduction}. Take a representative advertiser in the auction.
Denote $s_t \in \{0,1,2,\cdots,s_{\max}\}$ as the budget of this player at time $t$, where $s_{\max}\in \mathbb{N}^+$ is  the maximum budget allowed on the Ad exchange with a unit bidding price.
Denote  $a_t$ as the bid price submitted by this player  and $\alpha_t$ as the bidding/(action) distribution of the population. The reward for this  advertiser with bid $a_t$ and budget $s_t$ is 
  \begin{eqnarray}
\label{reward}
r_t = {\color{black}\textbf{I}_{w_t^M=1}}\left[(v_t-a^M_t)-(1+\rho){\color{black}\textbf{I}_{s_{t}<a^M_t}}(a^M_t-s_{t})\right].
\end{eqnarray}
Here $w^M_t$ takes values $1$ and $0$, with $w_t^M=1$ meaning this player winning the bid and $0$ otherwise.
The probability of winning the bid would depend on $M$, the index for the game intensity, and $\alpha_t$. {\color{black}(See discussion on $M$ in Appendix \ref{intensity}.)}
The conversion of clicks at time $t$ is $v_t$ and follows an unknown distribution. $a^M_t$ is the value of the second largest bid at time $t$, taking values from $0$ to $s_{\max}$, and  depends on both $M$ and $\mathcal{L}_t$.  Should the player win the bid, the reward $r_t$ consists of two parts, corresponding to the two terms in (\ref{reward}). The first term is the profit of wining the auction, as the winner only needs to pay for the second best bid {\color{black}$a_t^M$} in a Vickrey auction. The second term is the penalty of overshooting if the payment exceeds her budget, with a penalty rate $\rho$.
At each time $t$, the budget dynamics $s_t$ follows, \begin{eqnarray*}
    {s}_{t+1}=\left\{
                \begin{array}{ll}
                 s_t, \qquad & w^M_t \neq 1,\\
                 s_t - a^M_t, \qquad & w^M_t=1 \text{ and } a^M_t \leq s_t,\\
                 0, \qquad & w^M_t =1 \text{ and } a^M_t > s_t.
                \end{array}
              \right.
  \end{eqnarray*}
That is,  if this player does not win the bid, the budget will remain the same. If she wins and has enough money to pay, her budget will decrease from $s_t$ to $s_t - a^M_t$. However, if she wins  but does not have enough money, her budget will be $0$ after the payment and there will be a penalty in the reward function.
Note that in this game, both the rewards $r_t$ and the dynamics $s_t$ are unknown {\it a priori}.

In practice, one often modifies the dynamics of $s_{t+1}$  with a non-negative random budget fulfillment $\Delta({s}_{t+1})$ after the auction clearing \cite{GKP2012}, such that 
$\hat{s}_{t+1} = {s}_{t+1} + \Delta({s}_{t+1})$.
One may see some particular choices of $\Delta({s}_{t+1})$ in the experiment section (Section \ref{experiments}).

\section{Solution for GMFGs}\label{MFG_basic}
We now establish the existence and uniqueness of the NE  to (\ref{mfg}), by
generalizing the classical fixed-point approach for MFGs to this GMFG setting. (See \cite{HMC2006} and \cite{LL2007} for the classical case).  It consists of three steps. 
\paragraph{Step A.}
Fix $\pmb{\mathcal{L}}:=\{\mathcal{L}_t\}_{t=0}^{\infty}$,   (\ref{mfg}) becomes the  classical optimization problem.  Indeed, with $\pmb{\mathcal{L}}$ fixed, the population state distribution sequence $\pmb{\mu}:=\{\mu_t\}_{t=0}^{\infty}$ is also fixed, hence the space of admissible policies is reduced to the single-player case. Solving (\ref{mfg}) is now reduced to finding a policy sequence
 $\pi_{t,\pmb{\mathcal{L}}}^\star\in \Pi:=\{\pi\,|\,\pi:\mathcal{S}\rightarrow\mathcal{P}(\mathcal{A})\}$
 over all admissible  $\pmb{\pi}_{\pmb{\mathcal{L}}}=\{\pi_{t,\pmb{\mathcal{L}}}\}_{t=0}^{\infty}$,
   to maximize 
 \[
\begin{array}{ll}
  V(s,\pmb{\pi}_{\pmb{\mathcal{L}}}, \pmb{\mathcal{L}}):= &\mathbb{E}\left[\sum\limits_{t=0}^{\infty}\gamma^tr(s_t,a_t,\mathcal{L}_t)|s_0=s\right],\\
\text{subject to}& s_{t+1}\sim P(s_t,a_t,\mathcal{L}_t),\quad a_t\sim\pi_{t,\pmb{\mathcal{L}}}(s_t).
\end{array}
\]
Notice that with $\pmb{\mathcal{L}}$ fixed, one can safely suppress the dependency on $\mu_t$ in the admissible policies. 
Moreover,  given this fixed $\pmb{\mathcal{L}}$ sequence and the solution $\pmb{\pi}_{\pmb{\mathcal{L}}}^\star:=\{\pi_{t,\pmb{\mathcal{L}}}^\star\}_{t=0}^{\infty}$, one can 
define a mapping from the fixed population distribution sequence $\pmb{\mathcal{L}}$ to an arbitrarily chosen optimal randomized policy sequence. That is, 
$$\Gamma_1:\{\mathcal{P}(\mathcal{S}\times \mathcal{A})\}_{t=0}^{\infty}\rightarrow\{\Pi\}_{t=0}^{\infty}, $$
such that  $\pmb{\pi}_{\pmb{\mathcal{L}}}^\star=\Gamma_1(\pmb{\mathcal{L}})$.
Note that this $\pmb{\pi}_{\pmb{\mathcal{L}}}^\star$ sequence satisfies the single player side condition in Definition \ref{nash2} for the population state-action pair sequence $\pmb{\mathcal{L}}$. That is, 
$
V\left(s,\pmb{\pi}_{\pmb{\mathcal{L}}}^\star,{\color{black}\pmb{\mathcal{L}}}\right)\geq V\left(s,\pmb{\pi},{\color{black}\pmb{\mathcal{L}}}\right),$
for any policy sequence $\pmb{\pi} = \{\pi_t\}_{t=0}^{\infty}$ and any initial state $s\in\mathcal{S}$. 

As in the MFG literature \cite{HMC2006}, a feedback regularity condition is needed for analyzing Step A. 
\begin{assumption}\label{policy_assumption}
There exists a constant $d_1\geq 0$,
 such that for any $\pmb{\mathcal{L}}, \pmb{\mathcal{L}}^{\prime} \in \{\mathcal{P}(\mathcal{S}\times\mathcal{A})\}_{t=0}^{\infty}$, 
\begin{equation}\label{Gamma1_lip}
D(\Gamma_1(\pmb{\mathcal{L}}),\Gamma_1( \pmb{\mathcal{L}}^{\prime})) \leq d_1\mathcal{W}_1(\pmb{\mathcal{L}}, \pmb{\mathcal{L}}^{\prime}),
\end{equation}
where 
\begin{equation}
\begin{split}
D(\pmb{\pi},\pmb{\pi}^{\prime})&:=\sup_{s\in\mathcal{S}}\mathcal{W}_1(\pmb{\pi}(s),\pmb{\pi}^{\prime}(s))=\sup_{s\in\mathcal{S}}\sup_{t\in\mathbb{N}}W_1(\pi_t(s),\pi_t'(s)),\\
\mathcal{W}_1(\pmb{\mathcal{L}}, \pmb{\mathcal{L}}^{\prime})&:=\sup_{t\in\mathbb{N}}W_1(\mathcal{L}_t,\mathcal{L}_t'),
\end{split}
\end{equation}
and $W_1$ is the $\ell_1$-Wasserstein distance between probability measures \cite{metrics_prob,  COT_cuturi, OT_ON}. 
\end{assumption}


 \paragraph{Step B.} Based on the analysis in Step A and $\pmb{\pi}_{\pmb{\mathcal{L}}}^\star=\{\pi_{t,\pmb{\mathcal{L}}}^\star\}_{t=0}^{\infty}$, update the initial sequence $\pmb{\mathcal{L}}$ to $ \pmb{\mathcal{L}}^{\prime}$ following the controlled dynamics $P(\cdot|s_t, a_t,\mathcal{L}_t)$.

Accordingly, for any admissible policy sequence $\pmb{\pi} \in \{\Pi\}_{t=0}^{\infty}$ and a joint population state-action pair sequence $\pmb{\mathcal{L}}\in \{\mathcal{P}(\mathcal{S}\times \mathcal{A})\}_{t=0}^{\infty}$, define a mapping $\Gamma_2:\{\Pi\}_{t=0}^{\infty}\times \{\mathcal{P}(\mathcal{S}\times\mathcal{A})\}_{t=0}^{\infty}\rightarrow \{\mathcal{P}(\mathcal{S}\times\mathcal{A})\}_{t=0}^{\infty}$ as follows: 
\begin{eqnarray}
\Gamma_2(\pmb{\pi},\pmb{\mathcal{L}}):= \pmb{\hat{\mathcal{L}}} = \{\mathbb{P}_{s_t,a_t}\}_{t=0}^{\infty},
\end{eqnarray}
where $s_{t+1}\sim \mu_t P(\cdot|\cdot,a_t,\mathcal{L}_t)$,  $a_t\sim \pi_t(s_t)$, $s_0 \sim \mu_0$, and $\mu_t$ is the population state marginal of $\mathcal{L}_t$.

One needs a standard assumption in this step.
\begin{assumption}\label{population_assumption}
There exist constants $d_2,~d_3\geq 0$, such that for any admissible policy sequences $\pmb{\pi},\pmb{\pi}^1,\pmb{\pi}^2$ and joint distribution sequences $\pmb{\mathcal{L}}, \pmb{\mathcal{L}}^{1}, \pmb{\mathcal{L}}^{2}$, 
\begin{equation}\label{Gamma2_lip1}
\mathcal{W}_1(\Gamma_2(\pmb{\pi}^1,\pmb{\mathcal{L}}),\Gamma_2(\pmb{\pi}^2,\pmb{\mathcal{L}})) \leq d_2 D(\pmb{\pi}^1,\pmb{\pi}^2), 
\end{equation}
\begin{equation}\label{Gamma2_lip2}
\mathcal{W}_1(\Gamma_2(\pmb{\pi},\pmb{\mathcal{L}}^1{\color{black})},\Gamma_2(\pmb{\pi},\pmb{\mathcal{L}}^2)) \leq d_3 \mathcal{W}_1(\pmb{\mathcal{L}}^1,\pmb{\mathcal{L}}^2).
\end{equation}
\end{assumption}
Assumption \ref{population_assumption} can be reduced to Lipschitz continuity  and boundedness of the transition dynamics $P$. 
(See the Appendix for more details.)

\paragraph{Step C.} Repeat Step A and Step B until $ \pmb{\mathcal{L}}^{\prime}$ matches $\pmb{\mathcal{L}}$.

This step is to take care of the population side condition. To ensure the convergence of
 the combined step A and step B,  it suffices if  $\Gamma:\{\mathcal{P}(\mathcal{S}\times\mathcal{A})\}_{t=0}^{\infty}\rightarrow \{\mathcal{P}(\mathcal{S}\times\mathcal{A})\}_{t=0}^{\infty}$ is a contractive mapping under the $\mathcal{W}_1$ distance, with $\Gamma(\pmb{\mathcal{L}}):=\Gamma_2(\Gamma_1(\pmb{\mathcal{L}}), \pmb{\mathcal{L}})$.  Then by the Banach fixed point theorem {\color{black}and the completeness of the related metric spaces}, there exists  a unique NE to the GMFG. 

 In summary, we have                                                                                                                    
 \begin{theorem}[Existence and Uniqueness of GMFG solution] \label{thm1} Given Assumptions \ref{policy_assumption} and \ref{population_assumption}, and assuming that $d_1d_2+d_3< 1$, there exists a unique NE  to \eqref{mfg}.
 
\end{theorem}

\section{RL Algorithms for {\color{black}(stationary)} GMFGs}\label{AIQL}


In this section, we design the computational algorithm for the GMFG. Since the reward and transition distributions are unknown, this is  simultaneously learning the system and finding the  NE of the game. 
We will focus on the case with finite state and action spaces, \textit{i.e.}, $|\mathcal{S}|,|\mathcal{A}|<\infty$. 
We will look for stationary (time independent)  NEs. Accordingly, we abbreviate $\pmb{\pi}:=\{\pi\}_{t=0}^{\infty}$ and $\pmb{\mathcal{L}}:=\{\mathcal{L}\}_{t=0}^{\infty}$ as $\pi$ and $\mathcal{L}$, respectively. This stationarity property enables developing appropriate time-independent Q-learning algorithm, suitable for an infinite time horizon game. Modification from the GMFG framework to this special stationary setting is straightforward, and is left to Appendix \ref{stat_mfg_app}. Note that the assumptions to guarantee the existence and uniqueness of GMFG solutions are slightly different between the stationary and non-stationary cases. For instance, one can compare \eqref{Gamma2_lip1}-\eqref{Gamma2_lip2} with \eqref{Gamma2_lip1_stat}-\eqref{Gamma2_lip2_stat}.

The algorithm consists of two steps, 
parallel to Step $A$ and Step $B$ in Section \ref{MFG_basic}.
\paragraph{Step 1: Q-learning with stability for fixed $\mathcal{L}$.}
With $\mathcal{L}$ fixed, it becomes a standard learning problem for an infinite horizon MDP. We will focus on the Q-learning algorithm \cite{Sutton, Ben_tutorial}.  

The Q-learning algorithm approximates the value iteration by stochastic approximation. At each step with the  state $s$ and an action $a$, the system reaches state $s'$ according to the controlled dynamics and the Q-function is updated according to
\begin{equation}\label{Q-learning}
Q_{\mathcal{L}}(s,a) \leftarrow ~(1-\beta_t(s,a))Q_{\mathcal{L}}(s,a)+ \beta_t(s,a) \left[r(s,a,\mathcal{L})\right.+\left.\gamma \max\nolimits_{\tilde{a}} Q_{\mathcal{L}}(s^{\prime},\tilde{a})\right],
\end{equation}
where the step size $\beta_t(s,a)$ can be chosen as (\textit{cf}. \cite{Q-rate})
\[
\beta_t(s,a)=
\begin{cases}
|\#(s,a,t)+1|^{-h}, & (s,a)=(s_t,a_t),\\
0, & \text{otherwise}.
\end{cases}
\]
with $h\in(1/2,1)$. Here $\#(s,a,t)$ is the number of times up to time $t$ that one visits the  pair $(s,a)$.  
The algorithm then proceeds to choose action $a'$ based on $Q_{\mathcal{L}}$ with appropriate exploration strategies, including the $\epsilon$-greedy strategy. 

After obtaining the approximate $\hat{Q}_{\mathcal{L}}^\star$, in order to retrieve an approximately optimal policy, 
 it would be natural to define an \textbf{argmax-e} operator  so that actions with equal maximum Q-values would have equal probabilities to be selected. 
Unfortunately, the discontinuity and sensitivity of  \textbf{argmax-e}  could lead to an unstable algorithm (see Figure \ref{fig:naive} for the corresponding naive Algorithm \ref{AQL_MFG} in Appendix).  
 \footnote{\textbf{argmax-e} is not continuous:
Let $x=(1,1)$, then $\textbf{argmax-e}(x)=(1/2,1/2)$.
For any $\epsilon>0$, let $y=(1,1-\epsilon)$, then $\textbf{argmax-e}(y)=(1,0)$.}

Instead, 
we consider a Boltzmann policy based on the operator $\textbf{softmax}_c:\mathbb{R}^n\rightarrow\mathbb{R}^n$, defined as 
\begin{eqnarray}\label{eqn:soft_max_operator}
\textbf{softmax}_c(x)_i=\frac{\exp(cx_i)}{\sum_{j=1}^n\exp(cx_j)}.
\end{eqnarray}
This operator is smooth and  close to the \textbf{argmax-e} (see Lemma \ref{soft-arg-diff} in the Appendix). Moreover, 
even though Boltzmann policies are not optimal, the difference between the Boltzmann and the optimal one can always be controlled by choosing the hyper-parameter $c$ appropriately in the \textbf{softmax} operator. {\color{black}Note that other smoothing operators (\textit{e.g.}, Mellowmax \cite{Mellowmax}) may also be considered in the future.}

\paragraph{Step 2: error control in updating $\mathcal {L}$.}
Given the sub-optimality of the  Boltzmann policy, one needs to characterize the difference between the optimal policy and the non-optimal ones. In particular, one can define the action gap  between the best action and the second best action in terms of the  Q-value as $\delta^s(
\mathcal{L}):=\max_{a'\in\mathcal{A}}Q^\star_{\mathcal{L}}(s,a')-\max_{a\notin \text{argmax}_{a\in\mathcal{A}}Q^\star_{\mathcal{L}}(s,a)}Q^\star_{\mathcal{L}}(s,a)>0$. 
Action gap is important  for approximation algorithms \cite{gap-increase}, and are closely related to the problem-dependent bounds for regret analysis in reinforcement learning and  multi-armed bandits, and advantage learning algorithms including A2C \cite{A2C}. 

The problem is: in order for the learning algorithm to converge in terms of $\mathcal{L}$ (Theorem \ref{conv_AIQL}), one needs to ensure a definite differentiation between the optimal policy and the sub-optimal ones. This is problematic as the infimum of $\delta^s(\mathcal{L})$ over an infinite number of $\mathcal{L}$ can be $0$. To address this, the population distribution at step $k$, say  $\mathcal{L}_k$,
 needs to be projected to a finite grid,  called $\epsilon$-net. The relation between
the $\epsilon$-net and \text{action gaps} is as follows:

\noindent\textit{For any $\epsilon>0$,
there exist a positive function $\phi(\epsilon)$ and  an $\epsilon$-net $S_{\epsilon}:=\{\mathcal{L}^{(1)},\dots,\mathcal{L}^{(N_{\epsilon})}\}$ $\subseteq$ $\mathcal{P}(\mathcal{S}\times\mathcal{A})$, with the properties that $\min_{i=1,\dots,N_{\epsilon}} d_{TV}(\mathcal{L},\mathcal{L}^{(i)})\leq \epsilon$ for any $\mathcal{L}\in\mathcal{P}(\mathcal{S}\times\mathcal{A})$, and that 
$\max_{a'\in\mathcal{A}}Q^\star_{\mathcal{L}^{(i)}}(s,a')-Q^\star_{\mathcal{L}^{(i)}}(s,a)\geq \phi(\epsilon)$
 for any $i=1,\dots,N_{\epsilon}$,  $s\in\mathcal{S}$, and any $a\notin \text{argmax}_{a\in\mathcal{A}}Q^\star_{\mathcal{L}^{(i)}}(s,a)$. 
}

Here the existence of $\epsilon$-nets is trivial due to the compactness of the probability simplex $\mathcal{P}(\mathcal{S}\times\mathcal{A})$, and the existence of $\phi(\epsilon)$ comes from the finiteness of the action set $\mathcal{A}$.
In practice, $\phi(\epsilon)$ often takes the form of  $D\epsilon^{\alpha}$  with $D>0$ and the exponent $\alpha>0$ characterizing the decay rate  of the action gaps. 

Finally,  to enable Q-learning, it is assumed that one has access to a population simulator (See \cite{Batch_MARL, MARL_PDO}). That is,
for any policy $\pi\in\Pi$, given the current state $s\in\mathcal{S}$, for any population distribution $\mathcal{L}$, one can obtain the next state $s'\sim P(\cdot|s,\pi(s,\mu),\mathcal{L})$, a reward $r= r(s,\pi(s,\mu),\mathcal{L})$, and the next population distribution $\mathcal{L}'=\mathbb{P}_{s',\pi(s',\mu)}$. For brevity, we denote the simulator as $(s',r, \mathcal{L}')=\mathcal{G}(s,\pi,\mathcal{L})$. Here $\mu$ is the state marginal distribution of $\mathcal{L}$. 

In summary, we propose the following Algorithm \ref{AIQL_MFG}. 


\begin{algorithm}[h]
  \caption{\textbf{Q-learning for GMFGs (GMF-Q)}}
  \label{AIQL_MFG}
\begin{algorithmic}[1]
  \STATE \textbf{Input}: Initial $\mathcal{L}_0$, tolerance $\epsilon>0$.
 \FOR {$k=0, 1, \cdots$}
  \STATE Perform Q-learning for $T_k$ iterations to find the approximate Q-function $\hat{Q}_k^\star(s,a)=\hat{Q}^\star_{\mathcal{L}_k}(s,a)$ of an MDP with dynamics $P_{\mathcal{L}_k}(s'|s,a)$ and rewards $r_{\mathcal{L}_k}(s,a)$.
  \STATE Compute $\pi_k\in\Pi$ with $\pi_k(s)=\textbf{softmax}_c(\hat{Q}^\star_k(s,\cdot))$.
  \STATE Sample $s\sim \mu_k$ ($\mu_k$ is the population state marginal of $\mathcal{L}_k$), obtain $\tilde{\mathcal{L}}_{k+1}$ from $\mathcal{G}(s,\pi_k,\mathcal{L}_k)$.
  \STATE Find $\mathcal{L}_{k+1}=\textbf{Proj}_{S_{\epsilon}}(\tilde{\mathcal{L}}_{k+1})$
\ENDFOR
\end{algorithmic}
\end{algorithm}

Note that \textbf{softmax} is applied only at the end of each outer iteration when a good approximation of $Q$ function is obtained. Within the outer iteration for the MDP problem with fixed mean-field information, standard Q-learning method is applied.

Here $\textbf{Proj}_{S_{\epsilon}}(\mathcal{L})=\text{argmin}_{\mathcal{L}^{(1)},\dots,\mathcal{L}^{(N_{\epsilon})}}d_{TV}(\mathcal{L}^{(i)},\mathcal{L})$. 
{\color{black}For computational tractability, it would be sufficient to choose $S_{\epsilon}$ as a truncation grid so that projection of $\tilde{\mathcal{L}}_k$ onto the epsilon-net reduces to truncating $\tilde{\mathcal{L}}_k$ to a certain number of digits. For instance, in our experiment, the number of digits is chosen to be 4. The choices of the hyper-parameters $c$ and $T_k$ can be found in Lemma \ref{Q-finite-bd} and Theorem \ref{conv_AIQL}. In practice, the algorithm is rather robust  with respect to these hyper-parameters.} 

In the  special case when the rewards $r_{\mathcal{L}}$ and transition dynamics $P(\cdot|s,a,\mathcal{L})$ are known, one can replace the Q-learning step in the above Algorithm \ref{AIQL_MFG}  by a value iteration, resulting in the GMF-V Algorithm \ref{VI} in the Appendix.

We next show the convergence of this GMF-Q algorithm (Algorithm \ref{AIQL_MFG}) to an $\epsilon$-Nash of \eqref{mfg}, with complexity analysis.

\begin{theorem}[Convergence and complexity of GMF-Q]\label{conv_AIQL}
Assume the same conditions in {\color{black}Theorem \ref{thm1_stat}} and  Lemma \ref{Q-finite-bd} in the Appendix. For any tolerances $\epsilon,~\delta>0$, set $\delta_k=\delta/ K_{\epsilon,\eta}$, $\epsilon_k=(k+1)^{-(1+\eta)}$ for some $\eta\in(0,1]$ $(k=0,\dots,K_{\epsilon,\eta}-1)$, 
    $T_k=T^{\mathcal{M}_{\mathcal{L}_k}}(\delta_k,\epsilon_k)$ (defined in Lemma \ref{Q-finite-bd} in the Appendix) and  $c=\frac{\log(1/\epsilon)}{\phi(\epsilon)}$. Then with probability at least $1-2\delta$, $W_1(\mathcal{L}_{K_{\epsilon,\eta}},\mathcal{L}^\star)\leq C\epsilon$.   \\
    Moreover,  the total number of iterations $T=\sum_{k=0}^{K_{\epsilon,\eta} -1}T^{\mathcal{M}_{\mathcal{L}_k}}(\delta_k,\epsilon_k)$ is bounded by \footnote{{\color{black}Let $h=\frac{3}{4}$, $\eta=1$, the bound reduces to $T=O(K_{\epsilon}^{\frac{19}{3}}(\log(\frac{K_{\epsilon}}{\delta}))^{\frac{41}{3}})$. Note that this bound may not be tight.}}
\begin{equation}\label{Tbound}
T=O\left(K_{\epsilon,\eta}^{1+\frac{4}{h}}\left(\log(K_{\epsilon,\eta}/\delta)\right)^{\frac{2}{1-h}+\frac{2}{h}+3}\right).
\end{equation}
Here $K_{\epsilon,\eta}:=\left\lceil 2\max\left\{(\eta\epsilon{\color{black}/c})^{-1/\eta},\log_d(\epsilon/{\color{black}\max\{\text{diam}(\mathcal{S})\text{diam}(\mathcal{A}),{\color{black}c}\}})+1\right\}\right\rceil$ is the number of outer iterations, $h$ is the step-size exponent in Q-learning (defined in Lemma \ref{Q-finite-bd} in the Appendix), and the constant $C$ is independent of $\delta$, $\epsilon$ and $\eta$. 
\end{theorem}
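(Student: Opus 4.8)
The plan is to compare the random sequence $\{\mathcal{L}_k\}$ produced by Algorithm \ref{AIQL_MFG} against the exact fixed-point iteration $\mathcal{L}\mapsto \Gamma(\mathcal{L}):=\Gamma_2(\Gamma_1(\mathcal{L}),\mathcal{L})$ of Section \ref{MFG_basic}, which by Theorem \ref{thm1} (in its stationary form, deferred to the appendix) is a $d$-contraction in $\mathcal{W}_1$ with $d=d_1d_2+d_3<1$ and unique fixed point $\mathcal{L}^\star$. Writing $e_k:=W_1(\mathcal{L}_k,\mathcal{L}^\star)$ and inserting $\tilde{\mathcal{L}}_{k+1}$ and $\Gamma(\mathcal{L}_k)$,
\[
e_{k+1}\le W_1(\mathcal{L}_{k+1},\tilde{\mathcal{L}}_{k+1})+W_1(\tilde{\mathcal{L}}_{k+1},\Gamma(\mathcal{L}_k))+W_1(\Gamma(\mathcal{L}_k),\mathcal{L}^\star).
\]
The last term is $\le d\,e_k$ by the contraction. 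The first (projection) term is handled by the $\epsilon_k$-net: since $\mathcal{S}\times\mathcal{A}$ is finite, $W_1\le\mathrm{diam}(\mathcal{S}\times\mathcal{A})\,d_{TV}$, and $d_{TV}(\mathcal{L}_{k+1},\tilde{\mathcal{L}}_{k+1})\le\epsilon_k$ by definition of $\textbf{Proj}_{S_{\epsilon_k}}$. The middle (policy-error) term: because the single draw $s\sim\mu_k$ fed through the simulator $\mathcal{G}$ reproduces exactly the stationary analogue of $\Gamma_2(\pi_k,\mathcal{L}_k)$ while $\Gamma(\mathcal{L}_k)=\Gamma_2(\Gamma_1(\mathcal{L}_k),\mathcal{L}_k)$, Assumption \ref{population_assumption} (inequality \eqref{Gamma2_lip1}) bounds it by $d_2\,D(\pi_k,\pi^\star_{\mathcal{L}_k})$ with $\pi^\star_{\mathcal{L}_k}=\Gamma_1(\mathcal{L}_k)$ the optimal (argmax-e) policy.

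Next I would bound $D(\pi_k,\pi^\star_{\mathcal{L}_k})$ by splitting it into the sensitivity of $\textbf{softmax}_c$ to the Q-learning error plus the gap between $\textbf{softmax}_c(Q^\star_{\mathcal{L}_k})$ and $\textbf{argmax-e}(Q^\star_{\mathcal{L}_k})$. For the first piece, $\textbf{softmax}_c$ is Lipschitz with a constant depending on $c$, and Lemma \ref{Q-finite-bd} guarantees that $T_k=T^{\mathcal{M}_{\mathcal{L}_k}}(\delta_k,\epsilon_k)$ Q-learning steps give $\|\hat Q_k^\star-Q^\star_{\mathcal{L}_k}\|_\infty\le\epsilon_k$ with probability at least $1-\delta_k$. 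For the second piece, Lemma \ref{soft-arg-diff} together with the action-gap property of the net, $\max_{a'}Q^\star_{\mathcal{L}_k}(s,a')-Q^\star_{\mathcal{L}_k}(s,a)\ge\phi(\epsilon_k)$ for $a\notin\mathrm{argmax}$ (valid since $\mathcal{L}_k$ lies in the relevant $\epsilon$-net), combined with the scaling $c=\log(1/\epsilon)/\phi(\epsilon)$, forces this term to be $O(\epsilon)$ up to a factor $|\mathcal{A}|$. Collecting, on the event that all Q-learning calls succeed one obtains a recursion $e_{k+1}\le d\,e_k+C'(\epsilon_k+\epsilon)$; a union bound over $k=0,\dots,K_{\epsilon,\eta}-1$ with $\delta_k=\delta/K_{\epsilon,\eta}$ makes this event have probability at least $1-\delta$, the stated $1-2\delta$ leaving room for the remaining randomness.

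Unrolling, $e_{K}\le d^{K}e_0+C'\sum_{k=0}^{K-1}d^{K-1-k}(\epsilon_k+\epsilon)$ with $e_0\le\mathrm{diam}(\mathcal{S}\times\mathcal{A})$. The homogeneous term is $\le\epsilon$ once $K\ge\log_d(\epsilon/\mathrm{diam})$, the source of the second branch of the $\max$ in $K_{\epsilon,\eta}$. Splitting the sum around $k=K/2$, the geometric weights kill the head ($\le d^{K/2}/(1-d)$) while the tail is $\le\frac{1}{1-d}\sum_{k\ge K/2}(k+1)^{-(1+\eta)}=O\big(\eta^{-1}K^{-\eta}\big)$, which is $O(\epsilon)$ once $K\gtrsim(\eta\epsilon)^{-1/\eta}$, the first branch of the $\max$. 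Taking $K=K_{\epsilon,\eta}$ thus gives $W_1(\mathcal{L}_{K_{\epsilon,\eta}},\mathcal{L}^\star)\le C\epsilon$, and tracking $d_1,d_2,d_3,\mathrm{diam},|\mathcal{A}|,(1-d)^{-1}$ through the above shows $C$ is independent of $\epsilon,\delta,\eta$. For the complexity, plug the explicit (polynomial-in-$1/\epsilon$ with exponents governed by the step-size exponent $h$, polylogarithmic-in-$1/\delta$) form of $T^{\mathcal{M}}(\delta,\epsilon)$ from Lemma \ref{Q-finite-bd} into $T=\sum_{k=0}^{K_{\epsilon,\eta}-1}T^{\mathcal{M}_{\mathcal{L}_k}}(\delta_k,\epsilon_k)$ with $\epsilon_k=(k+1)^{-(1+\eta)}$, $\delta_k=\delta/K_{\epsilon,\eta}$; since $\epsilon_k^{-2/h}=(k+1)^{2(1+\eta)/h}$ and $\eta\le1$, the sum is $O(K_{\epsilon,\eta}^{1+4/h})$ up to the log powers, and the $k=K_{\epsilon,\eta}$ term dominates, yielding \eqref{Tbound}.

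I expect the main obstacle to be the second paragraph: quantitatively reconciling the two competing demands on the net — the action gap $\phi(\epsilon_k)$ must be large enough relative to $c$, the Q-accuracy $\epsilon_k$, and $|\mathcal{A}|$ for the Boltzmann policy to be $O(\epsilon)$-close to optimal, while the net must simultaneously be fine enough that the projection error $\epsilon_k$ does not overwhelm the contraction, and all of this must hold for the single scaling $c=\log(1/\epsilon)/\phi(\epsilon)$, $\epsilon_k=(k+1)^{-(1+\eta)}$ while the $W_1$–$d_{TV}$ conversions and the various Lipschitz constants collapse into one $\epsilon$-, $\delta$-, $\eta$-independent $C$. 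A secondary technical point is verifying that the single sample $s\sim\mu_k$ through the simulator reproduces the stationary map $\Gamma_2(\pi_k,\mathcal{L}_k)$ without introducing extra sampling error, which relies on the stationary reformulation of the framework deferred to the appendix.
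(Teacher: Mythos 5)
Your proposal follows essentially the same route as the paper's proof: the same triangle-inequality decomposition around $\Gamma(\mathcal{L}_k)=\Gamma_2(\Gamma_1(\mathcal{L}_k),\mathcal{L}_k)$ with the contraction factor $d=d_1d_2+d_3$, the policy error split into the $c$-Lipschitz \textbf{softmax} applied to the Q-learning error (Lemmas \ref{softmax-lip} and \ref{Q-finite-bd}) plus the \textbf{softmax}/\textbf{argmax-e} gap controlled by $\phi(\epsilon)$ and $c=\log(1/\epsilon)/\phi(\epsilon)$ (Lemma \ref{soft-arg-diff}), then telescoping with the sum split at $K/2$, the union bound with $\delta_k=\delta/K_{\epsilon,\eta}$, and the complexity obtained by summing $T^{\mathcal{M}}(\delta_k,\epsilon_k)$. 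The only slip is notational: the algorithm projects onto the \emph{fixed} net $S_\epsilon$ (projection error $\epsilon$ and action gap $\phi(\epsilon)$), not a $k$-dependent $S_{\epsilon_k}$, but since your recursion $e_{k+1}\leq d\,e_k+C'(\epsilon_k+\epsilon)$ already carries both terms, the argument and conclusion are unaffected.
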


The proof of Theorem \ref{conv_AIQL} in the Appendix depends on the Lipschitz continuity of the \textbf{softmax} operator \cite{softmax}, the closeness between \textbf{softmax} and the \textbf{argmax-e} (Lemma~\ref{soft-arg-diff} in the Appendix), and the complexity of Q-learning for the MDP (Lemma~\ref{Q-finite-bd} in the Appendix). 

\section{Experiment: repeated auction game}\label{experiments}

In this section, we report the performance of the proposed GMF-Q Algorithm.
The objectives of the experiments include 1) testing the convergence, stability, and learning ability of GMF-Q  in the GMFG setting, and 2) comparing GMF-Q with existing multi-agent reinforcement learning algorithms, including IL algorithm and MF-Q algorithm.


 We take  the GMFG framework for  the repeated auction game from Section \ref{section:example}. Here each advertiser learns to bid in the auction with a budget constraint.

 \paragraph{Parameters.}
The model parameters are set as: $|\mathcal{S}|=|\mathcal{A}|=10$,  
the overbidding penalty  $\rho=0.2$,  
 the distributions of the conversion rate $v \sim$ uniform(${\color{black}\{1,2,3,4\}})$,
 and the competition intensity index $M=5$.
 The random fulfillment
is chosen as:
if $s < s_{\max}$,  $\Delta(s)=1$ with probability $\frac{1}{2}$ and $\Delta(s)=0$ with probability $\frac{1}{2}$; if $s=s_{\max}$, $\Delta(s)=0$.

The algorithm parameters are (unless otherwise specified): the temperature parameter  $c=4.0$, the discount factor  $\gamma=0.8$,   the parameter $h$ from Lemma \ref{Q-finite-bd} in the Appendix being $h=0.87$, and the baseline inner iteration being $2000$.  
{Recall that for  GMF-Q, both   $v$ and the dynamics of $P$ for $s$ are unknown {\it a priori}.}
{\color{black}The $90\%$-confidence intervals are calculated with $20$ sample paths.}
\paragraph{Performance evaluation in the GMFG setting.}
Our experiment shows that the GMF-Q Algorithm is efficient and robust, and learns well. 
\paragraph{{\it Convergence and stability of GMF-Q.}}
GMF-Q  is efficient and robust. First, GMF-Q converges after about $10$ outer iterations; secondly, as the number of inner iterations increases, the error decreases (Figure~\ref{fig:inexact}); 
and finally,  the convergence is robust with respect to both the change of number of states and  the initial population distribution (Figure~\ref{fig:different_state}).
  
 In contrast, the  Naive algorithm does not converge even with $10000$ inner iterations,  and the {\color{black}joint distribution $\mathcal{L}_t$} keeps fluctuating (Figure~\ref{fig:naive}). 

\paragraph{{\it Learning accuracy of GMF-Q.}}
GMF-Q learns well. Its learning accuracy is tested against its special form GMF-V {\color{black}(Appendix \ref{GMF-V})}, with the latter assuming a known distribution of conversion rate $v$ and the dynamics $P$ for the budget $s$.
The relative $L_2$ distance between the Q-tables of these two algorithms is
$\Delta Q := \frac{\|Q_{\text{GMF-V}}-Q_{\text{GMF-Q}}\|_2}{\|Q_{\text{GMF-V}}\|_2}=0.098879$. This implies that GMF-Q  learns the true GMFG solution with $90$-percent accuracy  with $10000$ inner iterations.   

The heatmap in Figure~\ref{fig:q_learn_table} is the Q-table for GMF-Q Algorithm after $20$ outer iterations. Within each outer iteration, there are $T_k^{\text{GMF-Q}} = 10000$ inner iterations. The heatmap in Figure~\ref{fig:q_iteration_table} is the Q-table for GMF-Q Algorithm after $20$ outer iterations. Within each outer iteration, there are $T_k^{\text{GMF-V}} = 5000$ inner iterations. 

\begin{table}
		\begin{center}
    \caption{Q-table with $T_k^{\text{GMF-V}} = 5000$.}
    \label{tab:q_comparison}
    \begin{tabular}{c|c|c|c|c} 
          $T_k^{\text{GMF-Q}} $ &1000& 3000 & 5000&10000 \\
      \hline
      $\Delta Q$ &0.21263 & 0.1294& 0.10258&0.0989
    \end{tabular}
  \end{center}
  \end{table}
\begin{figure}
		\centering
\subfigure[\text{GMF-Q}.]{ \label{fig:q_learn_table}
\includegraphics[width=.39\textwidth]{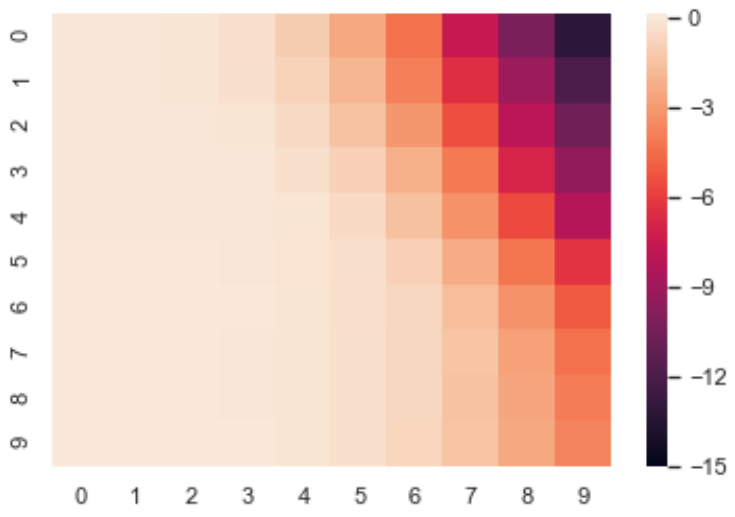}
}
\subfigure[GMF-V.]{\label{fig:q_iteration_table}
\includegraphics[width=.39\textwidth]{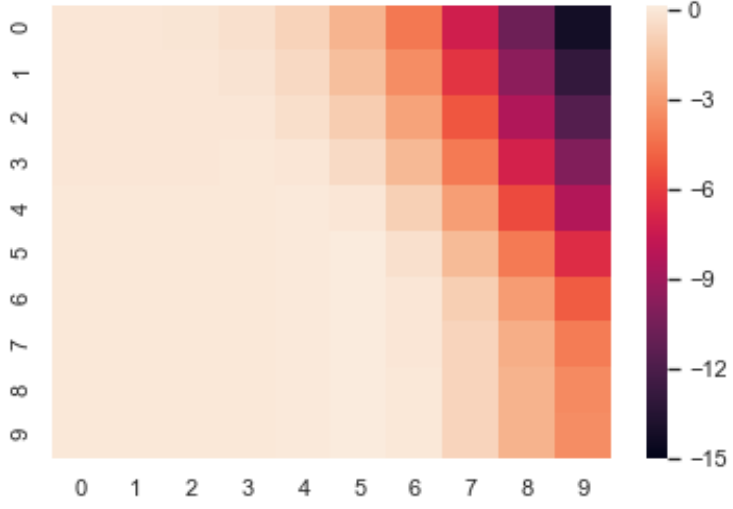}}
\caption{Q-tables: GMF-Q vs. GMF-V.}\label{fig:q_tables}
\end{figure}

\begin{figure}
  \centering
  \begin{minipage}[b]{0.48\textwidth}
    \centering
\includegraphics[width=.97\textwidth]{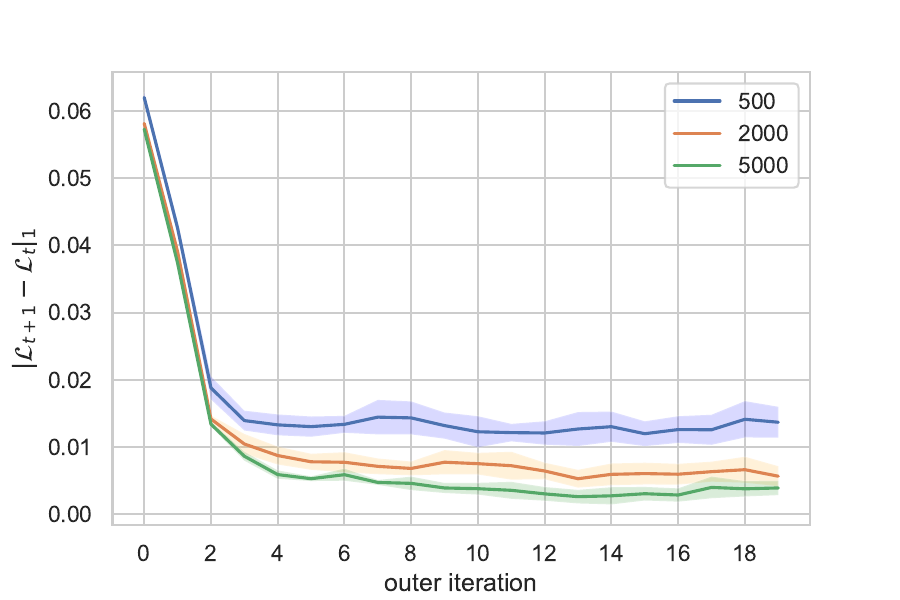}
\caption{Convergence with different  \\\hspace{\textwidth} number of inner iterations.}
\label{fig:inexact} 
  \end{minipage}
  \hfill
  \begin{minipage}[b]{0.48\textwidth}
    \centering
\includegraphics[width=.97\textwidth]{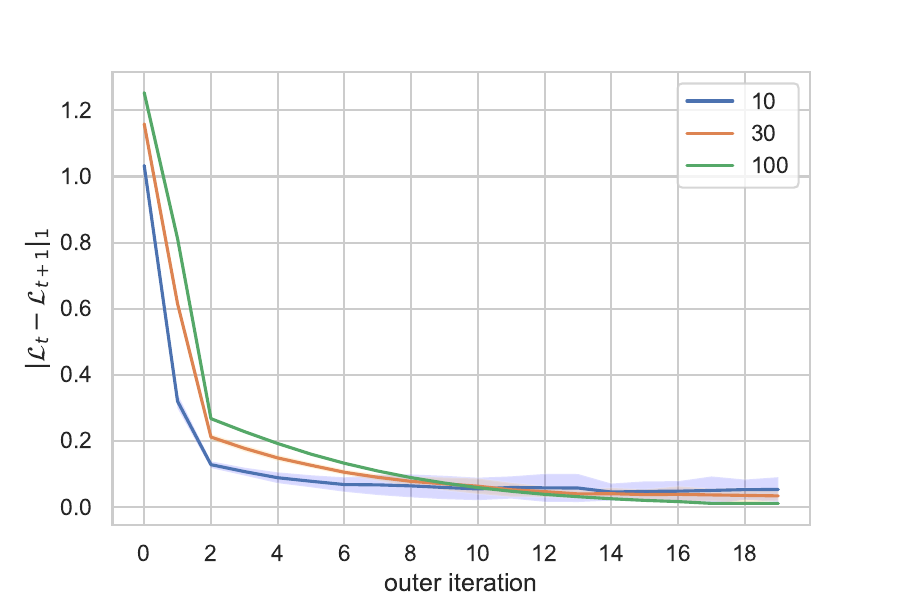}
\caption{Convergence with  different \\\hspace{\textwidth}number of states.}\label{fig:different_state}
  \end{minipage}
  \end{figure}
 
 \begin{figure} 
    \centering
\subfigure[fluctuation in $l_{\infty}$.]{ \label{fig:naive_1}
\includegraphics[width=.42\textwidth]{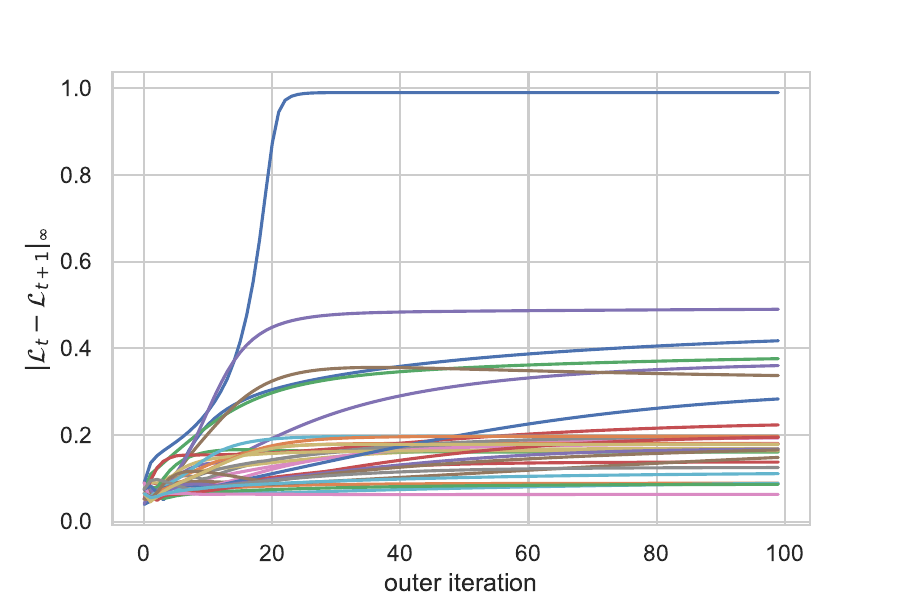}
}
\subfigure[fluctuation in $l_{1}$.]{ \label{fig:naive_1}
\includegraphics[width=.42\textwidth]{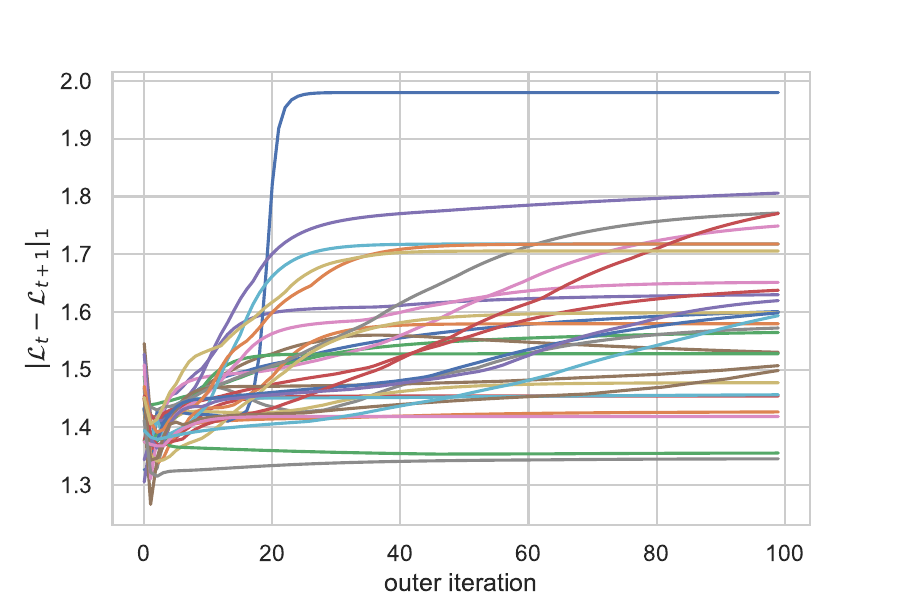}
}
\caption{Fluctuations of Naive Algorithm (30 sample paths).}\label{fig:naive}
\end{figure}

\begin{figure}
\centering
\subfigure[$|\mathcal{S}|=|\mathcal{A}|=10,{\color{black}N}=20$.]{ \label{fig:comparison1}
\includegraphics[width=.3\textwidth]{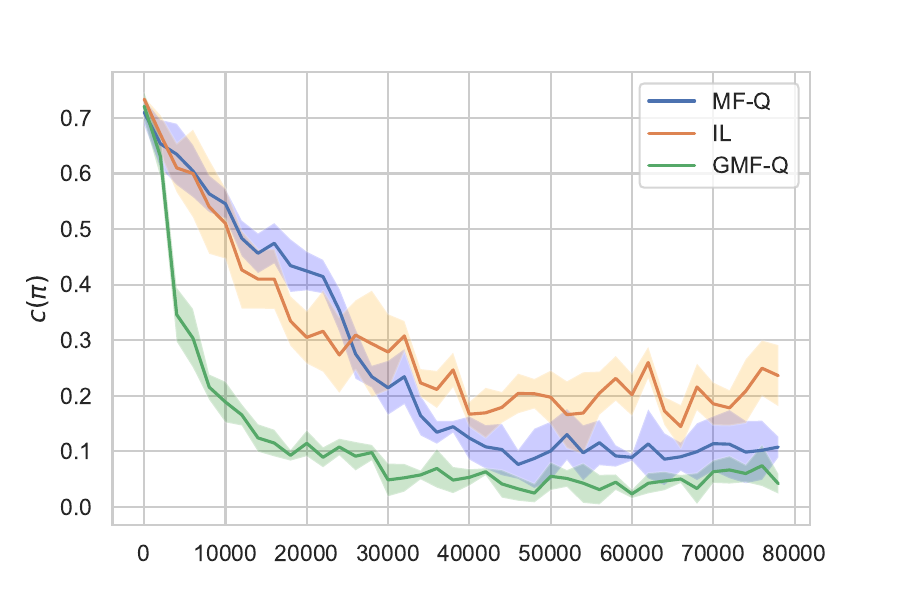}
}
\subfigure[$|\mathcal{S}|=|\mathcal{A}|=20,{\color{black}N}=20$.]{ \label{fig:comparison1}
\includegraphics[width=.3\textwidth]{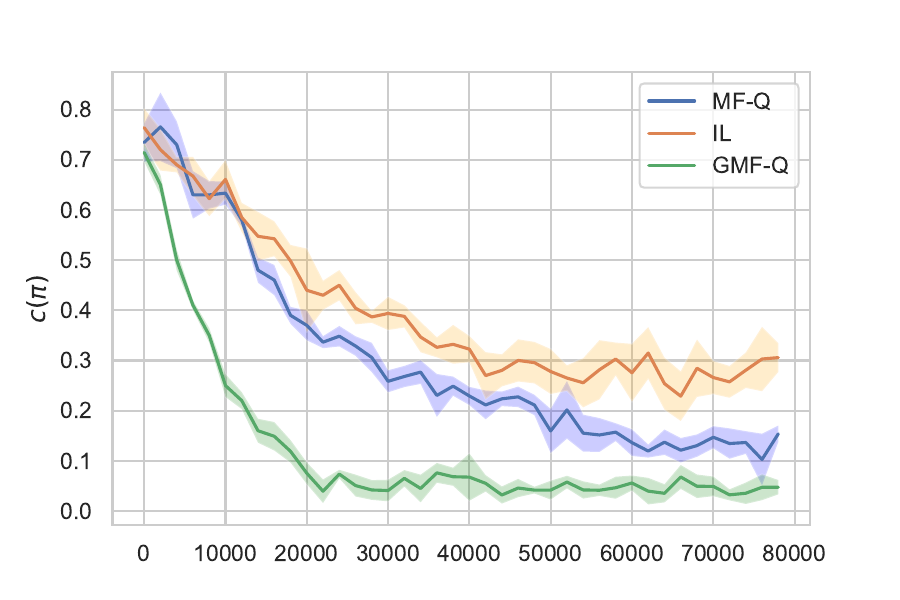}
}
\subfigure[$|\mathcal{S}|=|\mathcal{A}|=10,{\color{black}N}=40$.]{\label{fig:comparison2}
\includegraphics[width=.3\textwidth]{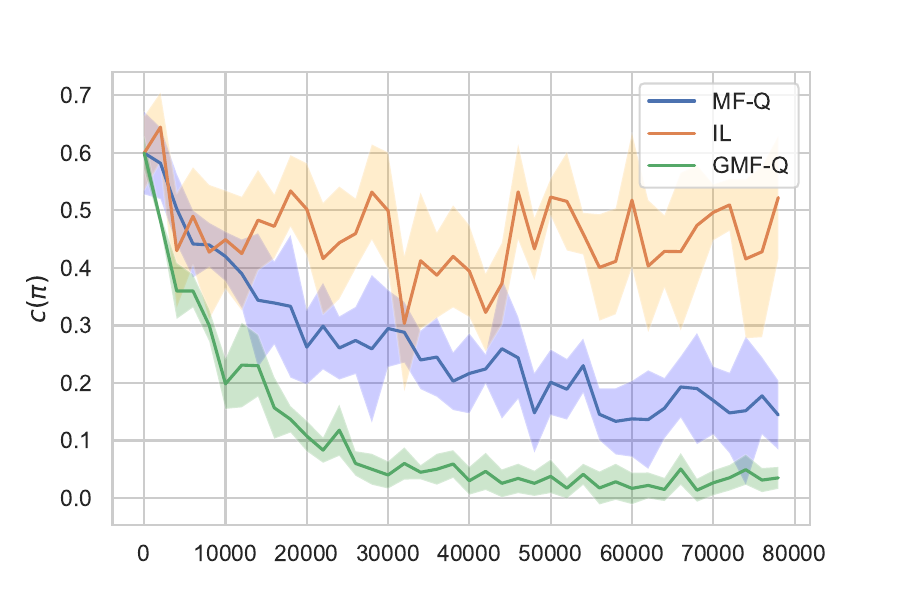}}

\caption{Learning accuracy based on $C(\pmb{\pi})$.}
\label{fig:comparison}
\end{figure}

\paragraph{Comparison with existing algorithms for $N$-player games.}
To test the effectiveness of GMF-Q for approximating $N$-player games, we next compare GMF-Q with  IL algorithm and  MF-Q algorithm. IL algorithm \cite{T1993} considers $N$ independent players and each player solves a decentralized reinforcement learning problem ignoring other players in the system. The MF-Q algorithm \cite{YLLZZW2018} extends the NASH-Q Learning algorithm for the $N$-player game introduced in \cite{HW2003}, adds the aggregate actions $(\bar{\pmb{a}}_{-i}=\frac{\sum_{j \ne i}a_j}{{\color{black}N}-1})$ from the opponents, and  works for the class of games where the interactions are only through the average actions of $N$ players.

 \paragraph{{\it Performance metric.}} 
We adopt  the following metric  to measure the difference between  a given policy $\pi$ and an NE (here $\epsilon_0>0$ is a safeguard, and is taken as $0.1$ in the experiments):
$$C(\pmb{\pi}) = \frac{1}{N|\mathcal{S}|^N}\sum\nolimits_{i=1}^N \sum\nolimits_{\pmb{s} \in \mathcal{S}^N  }\dfrac{\max_{{\pi}^i} V_i(\pmb{s},({\pmb{\pi}^{-i}},\pi^i))-V_i(\pmb{s},{\pmb{\pi}})}{|\max_{{\pi}^i} V_i(\pmb{s},({\pmb{\pi}^{-i}},\pi^i))|+\epsilon_0}.$$
Clearly $C(\pmb{\pi}) \geq 0$, and $C(\pmb{\pi}^*)=0$ if and only if $\pmb{\pi}^*$ is an NE. 
 Policy $\arg \max _{{\pi}_i} V_i(\pmb{s},({\pmb{\pi}^{-i}},\pi_i))$ is called the best response to $\pmb{\pi}^{-i}$.  A similar metric without normalization has been adopted in \cite{PPP2018}.

Our experiment (Figure \ref{fig:comparison}) shows that  GMF-Q is superior in terms of convergence rate, accuracy, and stability for approximating an $N$-player game:
GMF-Q  converges faster than  IL and  MF-Q, with the smallest error, and with the lowest variance, as {\color{black}$\epsilon$-net improves the stability.}

For instance, when $N=20$,  IL Algorithm converges  with the largest error $0.220$. 
The error from MF-Q is $0.101$,  smaller than  IL  but  still bigger than the error from GMF-Q.
The GMF-Q  converges with  the lowest error $0.065$. {\color{black} Moreover, as $N$ increases, the error of GMF-Q deceases while the errors of both MF-Q and IL increase significantly. As $|\mathcal{S}|$ and  $|\mathcal{A}|$ increase, GMF-Q is robust with respect to this increase of dimensionality, while 
 both MF-Q and IL clearly suffer from the increase of the dimensionality with decreased  convergence rate and accuracy.}
Therefore, GMF-Q is more scalable than IL and MF-Q, when the system is complex and the number of players $N$ is large.

\section{Conclusion}
This paper builds a GMFG framework for simultaneous learning and decision-making, establishes the existence and uniqueness of NE, and proposes a Q-learning algorithm GMF-Q with convergence and complexity analysis. Experiments demonstrate superior performance of GMF-Q.

\section*{Acknowledgment}
We thank Haoran Tang for the insightful early discussion on stabilizing the Q-learning algorithm and sharing the ideas of his work on soft-Q-learning \cite{softQ}, which motivates our adoption of the soft-max operators. {\color{black}We also thank the anonymous NeurIPS 2019 reviewers for the valuable suggestions.}

\newpage
\bibliography{mfg_rl.bib}
\bibliographystyle{plain}

\newpage
\appendix

\section{Distance metrics and completeness} 
This section reviews some basic properties of the Wasserstein distance. 
It then proves that the metrics defined in the main text are indeed distance functions and define complete metric spaces. 

\paragraph{$\ell_1$-Wasserstein distance and dual representation.} 
The $\ell_1$ Wasserstein distance over $\mathcal{P}(\mathcal{X})$ for $\mathcal{X}\subseteq\mathbb{R}^k$ is defined as 
\begin{equation}\label{W1}
W_1(\nu,\nu'):=\inf_{M\in\mathcal{M}(\nu,\nu')}\int_{\mathcal{X}\times\mathcal{X}}\|x-y\|_2\text{d}M(x,y).
\end{equation}
where $\mathcal{M}(\nu,\nu')$ is the set of all measures (couplings) on $\mathcal{X}\times\mathcal{X}$, with marginals $\nu$ and $\nu'$ on the two components, respectively.

The Kantorovich duality theorem enables the following equivalent dual representation of $W_1$:
\begin{equation}\label{W1_dual}
W_1(\nu,\nu')=\sup_{\|f\|_L\leq 1}\left|\int_{\mathcal{X}}fd\nu-\int_{\mathcal{X}}fd\nu'\right|,
\end{equation}
where the supremum is taken over all $1$-Lipschitz functions $f$, \textit{i.e.}, $f$ satisfying $|f(x)-f(y)|\leq \|x-y\|_2$ for all $x,y\in\mathcal{X}$.

The Wasserstein distance $W_1$ can also be related to the total variation distance via the following inequalities \cite{metrics_prob}: 
\begin{equation}\label{W1_tv}
d_{\min}(\mathcal{X})d_{TV}(\nu,\nu')\leq W_1(\nu,\nu')\leq \text{diam}(\mathcal{X})d_{TV}(\nu,\nu'),
\end{equation}
where $d_{\min}(\mathcal{X})=\min_{x\neq y\in\mathcal{X}}\|x-y\|_2$, which is guaranteed to be positive when $\mathcal{X}$ is finite.

When $\mathcal{S}$ and $\mathcal{A}$ are compact,  for any compact subset $\mathcal{X}\subseteq\mathbb{R}^k$, and for any $\nu,\nu'\in\mathcal{P}(\mathcal{X})$, $W_1(\nu,\nu')\leq \text{diam}(\mathcal{X})d_{TV}(\nu,\nu')\leq \text{diam}(\mathcal{X})<\infty$, where $\text{diam}(\mathcal{X})=\sup_{x,y\in\mathcal{X}}\|x-y\|_2$ and $d_{TV}$ is the total variation distance.
Moreover,  one can verify  
\begin{lemma}\label{metrics}
Both $D$ and $\mathcal{W}_1$ are distance functions, and they are finite for any input distribution pairs. In addition, both $(\{\Pi\}_{t=0}^{\infty},D)$ and $(\{\mathcal{P}(\mathcal{S}\times\mathcal{A})\}_{t=0}^{\infty}, \mathcal{W}_1)$ are \textit{complete metric spaces}.
\end{lemma}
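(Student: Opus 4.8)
\textbf{Proof proposal for Lemma \ref{metrics}.}

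The plan is to verify the claim in two stages: first that $D$ and $\mathcal{W}_1$ are genuine metrics with finite values, and then that the associated spaces are complete. For the metric axioms, I would argue that $D$ and $\mathcal{W}_1$ are each a ``sup over an index set'' of a known metric, so the axioms are inherited. Concretely, $\mathcal{W}_1(\pmb{\mathcal{L}},\pmb{\mathcal{L}}') = \sup_{t\in\mathbb{N}} W_1(\mathcal{L}_t,\mathcal{L}'_t)$, and since each $W_1$ on $\mathcal{P}(\mathcal{S}\times\mathcal{A})$ is a metric (standard for the Wasserstein distance on a compact Polish space), nonnegativity, symmetry, and the triangle inequality pass through the supremum termwise; the identity of indiscernibles follows because $\sup_t W_1(\mathcal{L}_t,\mathcal{L}'_t)=0$ forces $W_1(\mathcal{L}_t,\mathcal{L}'_t)=0$ for every $t$, hence $\mathcal{L}_t=\mathcal{L}'_t$ for every $t$. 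The same argument handles $D$, which is a sup over both $s\in\mathcal{S}$ and $t\in\mathbb{N}$ of $W_1(\pi_t(s),\pi'_t(s))$ on $\mathcal{P}(\mathcal{A})$. Finiteness is immediate from the compactness bound already recorded in the excerpt: $W_1(\nu,\nu')\le \mathrm{diam}(\mathcal{X})\, d_{TV}(\nu,\nu')\le \mathrm{diam}(\mathcal{X})<\infty$ with $\mathcal{X}=\mathcal{S}\times\mathcal{A}$ (resp.\ $\mathcal{A}$), and this bound is uniform in $t$ and $s$, so the supremum is finite.

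For completeness, I would take a Cauchy sequence $\{\pmb{\mathcal{L}}^{(n)}\}_n$ in $(\{\mathcal{P}(\mathcal{S}\times\mathcal{A})\}_{t=0}^\infty,\mathcal{W}_1)$ and observe that, for each fixed $t$, $W_1(\mathcal{L}_t^{(n)},\mathcal{L}_t^{(m)})\le \mathcal{W}_1(\pmb{\mathcal{L}}^{(n)},\pmb{\mathcal{L}}^{(m)})$, so $\{\mathcal{L}_t^{(n)}\}_n$ is Cauchy in $(\mathcal{P}(\mathcal{S}\times\mathcal{A}),W_1)$. The latter is a complete metric space — indeed, $\mathcal{S}\times\mathcal{A}$ is compact, hence $\mathcal{P}(\mathcal{S}\times\mathcal{A})$ with $W_1$ is a compact (in particular complete) metric space — so $\mathcal{L}_t^{(n)}\to \mathcal{L}_t^\star$ for some limit $\mathcal{L}_t^\star$. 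Set $\pmb{\mathcal{L}}^\star:=\{\mathcal{L}_t^\star\}_{t=0}^\infty$. It remains to upgrade the pointwise-in-$t$ convergence to convergence in the sup metric $\mathcal{W}_1$. This is the standard ``uniform Cauchy implies uniform convergence'' argument: given $\varepsilon>0$, pick $N$ with $\mathcal{W}_1(\pmb{\mathcal{L}}^{(n)},\pmb{\mathcal{L}}^{(m)})<\varepsilon$ for $n,m\ge N$; then for every $t$ and every $n\ge N$, letting $m\to\infty$ gives $W_1(\mathcal{L}_t^{(n)},\mathcal{L}_t^\star)\le \varepsilon$ using continuity of $W_1$ in its arguments; taking $\sup_t$ yields $\mathcal{W}_1(\pmb{\mathcal{L}}^{(n)},\pmb{\mathcal{L}}^\star)\le\varepsilon$. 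The same template applies verbatim to $(\{\Pi\}_{t=0}^\infty,D)$, with the extra supremum over $s\in\mathcal{S}$ carried along throughout and using completeness of $(\mathcal{P}(\mathcal{A}),W_1)$ at the fixed-$(s,t)$ level.

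The only genuinely non-bookkeeping ingredient, and the step I would flag as the crux, is the completeness of $(\mathcal{P}(\mathcal{X}),W_1)$ for $\mathcal{X}$ compact; once that is invoked the rest is the routine ``sup of metrics'' and ``uniform Cauchy'' machinery. I would cite this as a classical fact about Wasserstein spaces (e.g.\ \cite{metrics_prob, OT_ON}): on a compact metric space $\mathcal{X}$, convergence in $W_1$ is equivalent to weak convergence of measures, and $\mathcal{P}(\mathcal{X})$ is weakly compact by Prokhorov's theorem, hence $(\mathcal{P}(\mathcal{X}),W_1)$ is compact and therefore complete. A small technical point to be careful about is that when interchanging the limit $m\to\infty$ with the fixed-$t$ distance, one should use the continuity of $(\nu,\nu')\mapsto W_1(\nu,\nu')$ (or just the reverse triangle inequality $|W_1(\mathcal{L}_t^{(n)},\mathcal{L}_t^{(m)})-W_1(\mathcal{L}_t^{(n)},\mathcal{L}_t^\star)|\le W_1(\mathcal{L}_t^{(m)},\mathcal{L}_t^\star)\to 0$), which avoids any appeal to uniformity in $t$ at that stage.
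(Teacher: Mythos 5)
Your proof is correct and follows essentially the same route as the paper's: it reduces everything to the completeness of $(\mathcal{P}(\mathcal{X}),W_1)$ for compact $\mathcal{X}$ (which the paper simply cites) and then runs the componentwise Cauchy plus sup-metric argument. If anything, you make explicit the step the paper leaves implicit — upgrading pointwise-in-$(s,t)$ convergence to convergence in the sup metrics $D$ and $\mathcal{W}_1$ via the uniform-Cauchy argument — so nothing is missing.
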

These facts enable the usage of Banach fixed-point mapping theorem for the proof of existence and uniqueness (Theorems \ref{thm1} and \ref{thm1_stat}).
\begin{proof}[Proof of Lemma \ref{metrics}]
It is known that for any compact set $\mathcal{X}\subseteq\mathbb{R}^k$, $(\mathcal{P}(\mathcal{X}), W_1)$ defines a complete metric space \cite{wass_comp}. Since $W_1(\nu,\nu')\leq \text{diam}(\mathcal{X})$ is uniformly bounded for any $\nu,~\nu'\in\mathcal{P}(\mathcal{X})$, we know that $\mathcal{W}_1(\pmb{\mathcal{L}},\pmb{\mathcal{L}}')\leq \text{diam}(\mathcal{X})$ and $D(\pmb{\pi},\pmb{\pi'})\leq \text{diam}(\mathcal{X})$ as well, so they are both finite for any input distribution pairs. It is clear that they are distance functions based on the fact that $W_1$ is a distance function.

Finally, we show the completeness of the two metric spaces $(\{\Pi\}_{t=0}^{\infty},D)$ and $(\{\mathcal{P}(\mathcal{S}\times\mathcal{A})\}_{t=0}^{\infty}, \mathcal{W}_1)$. Take $(\{\Pi\}_{t=0}^{\infty},D)$ for example. Suppose that $\pmb{\pi}^k$ is a Cauchy sequence in $(\{\Pi\}_{t=0}^{\infty},D)$. Then for any $\epsilon>0$, there exists a positive integer $N$, such that for any $m,~n\geq N$, 
\begin{equation}
D(\pmb{\pi}^n,\pmb{\pi}^{m})\leq \epsilon\Longrightarrow W_1(\pi_t^n(s),\pi_t^{m}(s))\leq \epsilon\text{ for any $s\in\mathcal{S}$, $t\in\mathbb{N}$},
\end{equation}
which implies that $\pi_t^k(s)$ forms a Cauchy sequence in $(\mathcal{P}(\mathcal{A}),W_1)$, and hence by the completeness of $(\mathcal{P}(\mathcal{A}),W_1)$, $\pi_t^k(s)$ converges to some $\pi_t(s)\in\mathcal{P}(\mathcal{A})$. As a result,  $\pmb{\pi}^n\rightarrow\pmb{\pi}\in\{\Pi\}_{t=0}^{\infty}$ under metric $D$, which shows that $(\{\Pi\}_{t=0}^{\infty},D)$ is complete. 

The completeness of $(\{\mathcal{P}(\mathcal{S}\times\mathcal{A})\}_{t=0}^{\infty}, \mathcal{W}_1)$ can be proved similarly.
\end{proof}

The same argument for Lemma \ref{metrics} shows that   both $D$ and $W_1$ are distance functions and are finite for any input distribution pairs, with both $(\Pi, D)$ and $(\mathcal{P}(\mathcal{S}\times\mathcal{A}),W_1)$  again complete metric spaces.

\section{Existence and uniqueness for stationary NE of GMFGs} \label{stat_mfg_app}
\begin{definition}[Stationary NE for GMFGs]\label{nash2_stat} 
In \eqref{mfg}, a player-population profile ($\pi^\star$, $\mathcal{L}^\star$)
  is called a stationary NE if 
\begin{enumerate}
    \item (Single player side) For any policy $\pi$ and any initial state $s\in \mathcal{S}$, 
\begin{equation}
V\left(s,\pi^\star,{\color{black}\mathcal{L}^\star}\right)\geq V\left(s,\pi,{\color{black}\mathcal{L}^\star}\right).
\end{equation}
\item  (Population side) $\mathbb{P}_{s_t,a_t}= {\mathcal{L}^{\star}}$ for all $t\geq 0$, where $\{s_t,a_t\}_{t=0}^{\infty}$ is the dynamics under the policy  $\pi^\star$ starting from $s_0 \sim \mu^{\star}$, with $a_t\sim\pi^\star(s_t,{\color{black}\mu^{\star}})$, $s_{t+1}\sim P(\cdot|s_t,a_t,{\color{black}\mathcal{L}^\star})$, and $\mu^{\star}$ being the population state marginal of $\mathcal{L}^\star$.
\end{enumerate}
\end{definition}

The existence and uniqueness of the NE  to (\ref{mfg}) in the stationary setting can be established by modifying  appropriately the same  fixed-point approach for the GMFG in the main text. 

\paragraph{Step 1.}
Fix $\mathcal{L}$, the GMFG becomes the  classical optimization problem.  That is,  solving (\ref{mfg}) is now reduced to finding a policy
 $\pi_{\mathcal{L}}^\star\in \Pi:=\{\pi\,|\,\pi:\mathcal{S}\rightarrow\mathcal{P}(\mathcal{A})\}$ to maximize 
 \[
\begin{array}{ll}
  V(s,\pi_{\mathcal{L}}, \mathcal{L}):= &\mathbb{E}\left[\sum\limits_{t=0}^{\infty}\gamma^tr(s_t,a_t,\mathcal{L})|s_0=s\right],\\
\text{subject to}& s_{t+1}\sim P(s_t,a_t,\mathcal{L}),\quad a_t\sim\pi_{\mathcal{L}}(s_t).
\end{array}
\]
Now given this fixed $\mathcal{L}$ and the solution $\pi_{\mathcal{L}}^\star$ to the above optimization problem, one can again
define 
$$\Gamma_1:\mathcal{P}(\mathcal{S}\times \mathcal{A})\rightarrow\Pi, $$
such that  $\pi_{\mathcal{L}}^\star=\Gamma_1(\mathcal{L})$.
Note that this $\pi_{\mathcal{L}}^\star$ satisfies the single player side condition for the population state-action pair $L$, 
\begin{equation}
V\left(s,\pi_{\mathcal{L}}^\star,\mathcal{L}\right)\geq V\left(s,\pi,\mathcal{L}\right),
\end{equation}
for any policy $\pi$ and any initial state $s\in\mathcal{S}$. 

Accordingly, a similar feedback regularity condition is needed in this step.
\begin{assumption}\label{policy_assumption_stat}
There exists a constant $d_1\geq 0$,
 such that for any $\mathcal{L}, \mathcal{L}' \in \mathcal{P}(\mathcal{S}\times\mathcal{A})$, 
\begin{equation}\label{Gamma1_lip}
D(\Gamma_1(\mathcal{L}),\Gamma_1(\mathcal{L}'){\color{black})} \leq d_1W_1(\mathcal{L}, \mathcal{L}'),
\end{equation}
where 
\begin{equation}
\begin{split}
D(\pi,\pi')&:=\sup_{s\in\mathcal{S}}W_1(\pi(s),\pi'(s)),
\end{split}
\end{equation}
and $W_1$ is the $\ell_1$-Wasserstein distance (a.k.a. earth mover distance) between probability measures. 
\end{assumption}

 \paragraph{Step 2.} Based on the analysis of Step 1 and $\pi_{\mathcal{L}}^\star$, update the initial $\mathcal{L}$ to $\mathcal{L}'$ following the controlled dynamics $P(\cdot|s_t, a_t,\mathcal{L})$.

Accordingly, 
define a mapping $\Gamma_2:\Pi\times \mathcal{P}(\mathcal{S}\times\mathcal{A})\rightarrow \mathcal{P}(\mathcal{S}\times\mathcal{A})$ as follows: 
\begin{eqnarray}
\Gamma_2(\pi, \mathcal{L}):=\hat{\mathcal{L}}= \mathbb{P}_{s_1,a_1},
\end{eqnarray}
where {\color{black}$a_1\sim \pi(s_1)$}, $s_{1}\sim \mu P(\cdot|\cdot,a_0,\mathcal{L})$,  $a_0\sim\pi(s_0)$, $s_0 \sim \mu$, and $\mu$ is the population state marginal of $\mathcal{L}$.

One also needs a similar assumption in this step.
\begin{assumption}\label{population_assumption_stat}
There exist constants $d_2,~d_3\geq 0$, such that for any admissible policies $\pi,\pi_1,\pi_2$ and joint distributions $\mathcal{L}, \mathcal{L}_1, \mathcal{L}_2$, 
\begin{equation}\label{Gamma2_lip1_stat}
W_1(\Gamma_2(\pi_1,\mathcal{L}),\Gamma_2(\pi_2,\mathcal{L})) \leq d_2 D(\pi_1,\pi_2),\\
\end{equation}
\begin{equation}\label{Gamma2_lip2_stat}
W_1(\Gamma_2(\pi,\mathcal{L}_1),\Gamma_2(\pi,\mathcal{L}_2)) \leq d_3 W_1(\mathcal{L}_1,\mathcal{L}_2).
\end{equation}
\end{assumption}

\paragraph{Step 3.} Repeat until $\mathcal{L}'$ matches $\mathcal{L}$.

This step is to ensure the population side condition. To ensure the convergence of
 the combined step one and step two,  it suffices if  $\Gamma:\mathcal{P}(\mathcal{S}\times\mathcal{A})\rightarrow \mathcal{P}(\mathcal{S}\times\mathcal{A})$ with $\Gamma(\mathcal{L}):=\Gamma_2(\Gamma_1(\mathcal{L}), \mathcal{L})$ is a contractive mapping (under  the $W_1$ distance). 
 
Similar to the proof of Theorem \ref{thm1},  again by the Banach fixed point theorem and the completeness of the related metric spaces, there exists  a unique stationary NE of the GMFG.  That is, 

 \begin{theorem}[Existence and Uniqueness of stationary MFG solution] \label{thm1_stat} Given Assumptions \ref{policy_assumption_stat} and \ref{population_assumption_stat}, and assume $d_1d_2+d_3< 1$. Then there exists a unique stationary NE  to \eqref{mfg}.
\end{theorem}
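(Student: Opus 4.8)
The plan is to mirror the proof of Theorem~\ref{thm1}: I would show that the composite map $\Gamma := \Gamma_2(\Gamma_1(\cdot),\cdot)$ is a $W_1$-contraction on the complete metric space $(\mathcal{P}(\mathcal{S}\times\mathcal{A}), W_1)$, and then identify the stationary NE with its unique fixed point. The only structural change from the non-stationary case is the replacement of infinite sequences by single distributions/policies, and of $\mathcal{W}_1(\cdot,\cdot)$, $D(\pmb{\pi},\pmb{\pi}')$ by $W_1(\cdot,\cdot)$, $D(\pi,\pi')$; the completeness of $(\mathcal{P}(\mathcal{S}\times\mathcal{A}),W_1)$ is the stationary analogue of Lemma~\ref{metrics}, already recorded in the excerpt.

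\textbf{Step I (Contraction).} Fix $\mathcal{L}_1,\mathcal{L}_2\in\mathcal{P}(\mathcal{S}\times\mathcal{A})$. Inserting the intermediate point $\Gamma_2(\Gamma_1(\mathcal{L}_1),\mathcal{L}_2)$ and using the triangle inequality for $W_1$,
\begin{align*}
W_1(\Gamma(\mathcal{L}_1),\Gamma(\mathcal{L}_2))
&\le W_1\bigl(\Gamma_2(\Gamma_1(\mathcal{L}_1),\mathcal{L}_1),\,\Gamma_2(\Gamma_1(\mathcal{L}_1),\mathcal{L}_2)\bigr)
 + W_1\bigl(\Gamma_2(\Gamma_1(\mathcal{L}_1),\mathcal{L}_2),\,\Gamma_2(\Gamma_1(\mathcal{L}_2),\mathcal{L}_2)\bigr)\\
&\le d_3\, W_1(\mathcal{L}_1,\mathcal{L}_2) + d_2\, D(\Gamma_1(\mathcal{L}_1),\Gamma_1(\mathcal{L}_2))
 \le (d_1 d_2 + d_3)\, W_1(\mathcal{L}_1,\mathcal{L}_2),
\end{align*}
where the second inequality applies the two bounds in Assumption~\ref{population_assumption_stat} and the third applies Assumption~\ref{policy_assumption_stat}. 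Since $d_1 d_2 + d_3 < 1$, $\Gamma$ is a contraction, so by completeness and the Banach fixed-point theorem it has a unique fixed point $\mathcal{L}^\star$.

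\textbf{Step II (Fixed point $\Rightarrow$ NE, and uniqueness).} Set $\pi^\star := \Gamma_1(\mathcal{L}^\star)$. The single-player side of Definition~\ref{nash2_stat} holds by the construction of $\Gamma_1$ in Step 1. For the population side, $\Gamma_2(\pi^\star,\mathcal{L}^\star)=\Gamma(\mathcal{L}^\star)=\mathcal{L}^\star$; unwinding the definition of $\Gamma_2$, this says that when $s_0\sim\mu^\star$, $a_0\sim\pi^\star(s_0)$, $s_1\sim\mu^\star P(\cdot|\cdot,a_0,\mathcal{L}^\star)$, $a_1\sim\pi^\star(s_1)$, one has $\mathbb{P}_{s_1,a_1}=\mathcal{L}^\star$ and in particular $s_1\sim\mu^\star$. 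Because $\pi^\star$ and $P(\cdot|\cdot,\cdot,\mathcal{L}^\star)$ are time-homogeneous, an induction on $t$ then gives $\mathbb{P}_{s_t,a_t}=\mathcal{L}^\star$ for all $t\ge0$ (the base case $\mathbb{P}_{s_0,a_0}=\mathcal{L}^\star$ holding since $\mathcal{L}^\star=\Gamma_2(\pi^\star,\mathcal{L}^\star)$ has state marginal $\mu^\star$ and action-given-state conditional $\pi^\star$). Conversely, if $(\pi,\mathcal{L})$ is any stationary NE, its population-side condition yields $\Gamma_2(\pi,\mathcal{L})=\mathcal{L}$, and the optimality of $\pi$ for $\mathcal{L}$ lets one conclude $\Gamma(\mathcal{L})=\Gamma_2(\Gamma_1(\mathcal{L}),\mathcal{L})=\mathcal{L}$, so uniqueness of the fixed point forces $\mathcal{L}=\mathcal{L}^\star$ and pins down the policy on $\mathrm{supp}(\mu^\star)$.

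The contraction estimate in Step~I is routine once Assumptions~\ref{policy_assumption_stat} and~\ref{population_assumption_stat} are granted, so the genuinely delicate work — which I would flag as the main obstacle — lies in Step~II: propagating the one-step consistency $\Gamma_2(\pi^\star,\mathcal{L}^\star)=\mathcal{L}^\star$ to all times via the time-homogeneity of the iteration, and, in the converse direction, arguing that an \emph{arbitrary} NE induces a fixed point of $\Gamma$ despite $\Gamma_1$ being defined through an a priori non-canonical selection of an optimal policy (so one must either invoke uniqueness of the optimal control for fixed $\mathcal{L}$ or show the particular selection is immaterial for $\Gamma_2$).
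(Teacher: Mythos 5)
Your proposal is correct and follows essentially the same route as the paper: the paper also proves contraction of $\Gamma(\mathcal{L})=\Gamma_2(\Gamma_1(\mathcal{L}),\mathcal{L})$ via the triangle inequality (with the symmetric intermediate point $\Gamma_2(\Gamma_1(\mathcal{L}_2),\mathcal{L}_1)$ instead of your $\Gamma_2(\Gamma_1(\mathcal{L}_1),\mathcal{L}_2)$) together with Assumptions~\ref{policy_assumption_stat} and~\ref{population_assumption_stat}, and then invokes the Banach fixed-point theorem on the complete space from Lemma~\ref{metrics}. Your Step~II spells out the NE--fixed-point equivalence (including the time-homogeneity induction and the selection issue in $\Gamma_1$) that the paper asserts in a single line, which is a fair and more careful elaboration rather than a different method.
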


\section{Additional comments on assumptions} 
As mentioned in the main text, the single player side Assumption \ref{policy_assumption} and its counterpart  Assumption \ref{policy_assumption_stat} for the stationary version correspond to the feedback regularity condition in the classical MFG literature. Here we add some comments on the population side Assumption \ref{population_assumption} and its stationary version Assumption \ref{population_assumption_stat}. For simplicity and clarity, let us consider the stationary case with finite state and action spaces. Then we have the following result. 
\begin{lemma}\label{assumption2_exp}
Suppose that $\max_{s,a,\mathcal{L},s'}P(s'|s,a,\mathcal{L})\leq c_1$, and that $P(s'|s,a,\cdot)$ is $c_2$-Lipschitz in $W_1$, \textit{i.e.},
\begin{equation}
|P(s'|s,a,\mathcal{L}_1)-P(s'|s,a,\mathcal{L}_2)|\leq c_2W_1(\mathcal{L}_1,\mathcal{L}_2).
\end{equation}
Then in Assumption \ref{population_assumption_stat}, $d_2$ and $d_3$ can be chosen as 
\begin{equation}
d_2=\frac{2\text{diam}(\mathcal{S})\text{diam}(\mathcal{A})|\mathcal{S}|c_1}{d_{\min}(\mathcal{A})}
\end{equation}
 and $d_3=\frac{\text{diam}(\mathcal{S})\text{diam}(\mathcal{A})c_2}{2}$, respectively.
\end{lemma}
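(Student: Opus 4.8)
The plan is to reduce both inequalities in Assumption \ref{population_assumption_stat} to elementary $\ell_1$ estimates, using the two-sided comparison $d_{\min}(\mathcal X)\,d_{TV}(\nu,\nu')\le W_1(\nu,\nu')\le \text{diam}(\mathcal X)\,d_{TV}(\nu,\nu')$ from \eqref{W1_tv} together with the identity $d_{TV}(\nu,\nu')=\tfrac12\|\nu-\nu'\|_1$, which is where the factor $\tfrac12$ in $d_3$ originates. First I would write $\Gamma_2$ out explicitly on the finite spaces: with $\mu$ the state marginal of $\mathcal L$,
\[
\Gamma_2(\pi,\mathcal L)(s',a')=\pi(a'\mid s')\sum_{s\in\mathcal S}\sum_{a\in\mathcal A}\mu(s)\,\pi(a\mid s)\,P(s'\mid s,a,\mathcal L),
\]
so that $\Gamma_2$ is the composition of the policy applied to $\mu$, the stochastic kernel $P(\cdot\mid\cdot,\cdot,\mathcal L)$, and the policy applied again to the resulting state marginal. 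The structural fact I will invoke repeatedly is that convolving a signed measure with a stochastic kernel does not increase its $\ell_1$ norm, so the only places a perturbation can grow are when a pointwise kernel bound is summed over the $|\mathcal S|$ target states, and when one converts back from $d_{TV}$ to $W_1$.

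For \eqref{Gamma2_lip1} I would hold $\mathcal L$ (hence $\mu$) fixed and vary $\pi$; the two occurrences of $\pi$ in the display are swapped one at a time via the triangle inequality. Each swap costs $\sup_{s}\|\pi_1(s)-\pi_2(s)\|_1=2\sup_s d_{TV}(\pi_1(s),\pi_2(s))$, which by the lower bound in \eqref{W1_tv} is at most $\tfrac{2}{d_{\min}(\mathcal A)}\sup_s W_1(\pi_1(s),\pi_2(s))=\tfrac{2}{d_{\min}(\mathcal A)}D(\pi_1,\pi_2)$; bounding the resulting sum over target states crudely by $|\mathcal S|$ times the pointwise bound $P\le c_1$ and then converting $d_{TV}(\Gamma_2(\pi_1,\mathcal L),\Gamma_2(\pi_2,\mathcal L))$ back to $W_1$ through the upper bound in \eqref{W1_tv} produces a Lipschitz constant of the advertised form $d_2=\tfrac{2\,\text{diam}(\mathcal S)\text{diam}(\mathcal A)|\mathcal S|c_1}{d_{\min}(\mathcal A)}$ (sharper constants are obtainable by instead summing $P(\cdot\mid s,a,\mathcal L)$ to $1$, but the loose bound is all we need).

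For \eqref{Gamma2_lip2} I would hold $\pi$ fixed and vary $\mathcal L$, now inside the kernel $P(\cdot\mid\cdot,\cdot,\mathcal L)$; the $c_2$-Lipschitz hypothesis gives $|P(s'\mid s,a,\mathcal L_1)-P(s'\mid s,a,\mathcal L_2)|\le c_2 W_1(\mathcal L_1,\mathcal L_2)$ pointwise, and since the policy steps flanking the kernel are themselves stochastic kernels they do not amplify this perturbation. Summing the pointwise bound over target states and converting $\tfrac12\|\Gamma_2(\pi,\mathcal L_1)-\Gamma_2(\pi,\mathcal L_2)\|_1$ back to $W_1$ gives $d_3=\tfrac{\text{diam}(\mathcal S)\text{diam}(\mathcal A)c_2}{2}$. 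I expect the only genuine work here to be bookkeeping — keeping $c_1$, $c_2$, $|\mathcal S|$, $d_{\min}(\mathcal A)$ and the various diameters consistent across each $W_1\leftrightarrow d_{TV}$ passage and each convolution step so that no spurious factor slips in — and the one subtlety to watch is that $\mu$ is itself the state marginal of $\mathcal L$, so in \eqref{Gamma2_lip2} one must either note that the additional initial-marginal perturbation is controlled by $W_1(\mu_1,\mu_2)\le W_1(\mathcal L_1,\mathcal L_2)$ (the marginal projection being $1$-Lipschitz in $W_1$) and absorb it into $d_3$, or adopt the convention in which the two population profiles are compared through the transition dynamics only.
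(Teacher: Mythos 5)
Your proposal follows essentially the same route as the paper's proof: expand $\Gamma_2$ explicitly on the finite spaces, sandwich $W_1$ between $d_{\min}\cdot d_{TV}$ and $\mathrm{diam}\cdot d_{TV}$, swap the two occurrences of $\pi$ one at a time via the triangle inequality together with the pointwise bound $P\leq c_1$ for \eqref{Gamma2_lip1}, and use the pointwise $c_2$-Lipschitz bound on the kernel for \eqref{Gamma2_lip2}, arriving at the same constants. The differences are cosmetic — your ``stochastic kernels do not increase the $\ell_1$ norm'' bookkeeping and your explicit remark about the state marginal of $\mathcal{L}$, which the paper's proof handles implicitly by keeping a single $\mu$ in the comparison.
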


Lemma \ref{assumption2_exp} provides an explicit characterization of the population side assumptions based only on the boundedness and Lipschitz properties of the transition dynamics $P$. In particular, $c_1$ becomes smaller when the transition dynamics becomes more diverse and the state space becomes larger.

\begin{proof} (Lemma \ref{assumption2_exp})
We begin by noticing that $\mathcal{L}'=\Gamma_2(\pi,\mathcal{L})$ can be expanded and computed as follows:
\begin{equation}
\mu'(s')=\sum\nolimits_{s\in\mathcal{S},a\in\mathcal{A}}\mu(s)P(s'|s,a,\mathcal{L})\pi(s,a),\quad \mathcal{L}'(s',a')=\mu'(s')\pi(s',a'),
\end{equation}
where $\mu$ is the state marginal distribution of $\mathcal{L}$.

Now by the inequalities (\ref{W1_tv}), we have
\begin{equation}
\begin{split}
W_1&(\Gamma_2(\pi_1,\mathcal{L}),\Gamma_2(\pi_2,\mathcal{L}))\leq \text{diam}(\mathcal{S}\times\mathcal{A})d_{TV}(\Gamma_2(\pi_1,\mathcal{L}),\Gamma_2(\pi_2,\mathcal{L}))\\
=&\dfrac{\text{diam}(\mathcal{S}\times\mathcal{A})}{2}\sum_{s'\in\mathcal{S},a'\in\mathcal{A}}\left|\sum_{s\in\mathcal{S},a\in\mathcal{A}}\mu(s)P(s'|s,a,\mathcal{L})\left(\pi_1(s,a)\pi_1(s',a')-\pi_2(s,a)\pi_2(s',a')\right)\right|\\
\leq& \dfrac{\text{diam}(\mathcal{S}\times\mathcal{A})}{2}\max_{s,a,\mathcal{L},s'}P(s'|s,a,\mathcal{L})\sum_{s,a,s',a'}\mu(s)(\pi_1(s,a)+\pi_2(s,a))|\pi_1(s',a')-\pi_2(s',a')|\\
\leq& \dfrac{\text{diam}(\mathcal{S}\times\mathcal{A})}{2}\max_{s,a,\mathcal{L},s'}P(s'|s,a,\mathcal{L})\sum_{s',a'}|\pi_1(s',a')-\pi_2(s',a')|\cdot (1+1)\\
=&2\text{diam}(\mathcal{S}\times\mathcal{A})\max_{s,a,\mathcal{L},s'}P(s'|s,a,\mathcal{L})\sum_{s'}d_{TV}(\pi_1(s'),\pi_2(s'))\\
\leq & \frac{2\text{diam}(\mathcal{S}\times\mathcal{A})\max_{s,a,\mathcal{L},s'}P(s'|s,a,\mathcal{L})|\mathcal{S}|}{d_{\min}(\mathcal{A})}D(\pi_1,\pi_2)=  \frac{2\text{diam}(\mathcal{S})\text{diam}(\mathcal{A})|\mathcal{S}|c_1}{d_{\min}(\mathcal{A})}D(\pi_1,\pi_2).
\end{split}
\end{equation}

Similarly, we have
\begin{equation}
\begin{split}
W_1&(\Gamma_2(\pi,\mathcal{L}_1),\Gamma_2(\pi,\mathcal{L}_2))\leq \text{diam}(\mathcal{S}\times\mathcal{A})d_{TV}(\Gamma_2(\pi,\mathcal{L}_1),\Gamma_2(\pi,\mathcal{L}_2))\\
=&\dfrac{\text{diam}(\mathcal{S}\times\mathcal{A})}{2}\sum_{s'\in\mathcal{S},a'\in\mathcal{A}}\left|\sum_{s\in\mathcal{S},a\in\mathcal{A}}\mu(s)\pi(s,a)\pi(s',a')\left(P(s'|s,a,\mathcal{L}_1)-P(s'|s,a,\mathcal{L}_2)\right)\right|\\
\leq & \dfrac{\text{diam}(\mathcal{S}\times\mathcal{A})}{2}\sum_{s,a,s',a'}\mu(s)\pi(s,a)\pi(s',a')\left|P(s'|s,a,\mathcal{L}_1)-P(s'|s,a,\mathcal{L}_2)\right|\\
\leq & \frac{\text{diam}(\mathcal{S})\text{diam}(\mathcal{A})c_2}{2}.
\end{split}
\end{equation}
This completes {\color{black}the} proof.
\end{proof}

\section{Proof of  Theorems \ref{thm1} and \ref{thm1_stat}}

For notational simplicity, we only present the proof for the stationary case (Theorem \ref{thm1_stat}). The proof of Theorems \ref{thm1} is the same with appropriate  notational changes. 

First by Definition \ref{nash2_stat} and the definitions of $\Gamma_i$ $(i=1,2)$, $(\pi,\mathcal{L})$ is a stationary NE  iff $\mathcal{L}=\Gamma(\mathcal{L})=\Gamma_2(\Gamma_1(\mathcal{L}),\mathcal{L})$ and $\pi=\Gamma_1(\mathcal{L})$, where $\Gamma(\mathcal{L})=\Gamma_2(\Gamma_1(\mathcal{L}),\mathcal{L})$. This indicates that for any $\mathcal{L}_1,\mathcal{L}_2\in\mathcal{P}(\mathcal{S}\times\mathcal{A})$,
\begin{equation}
\begin{split}
    &W_1(\Gamma(\mathcal{L}_1),\Gamma(\mathcal{L}_2))=W_1(\Gamma_2(\Gamma_1(\mathcal{L}_1),\mathcal{L}_1),\Gamma_2(\Gamma_1(\mathcal{L}_2),\mathcal{L}_2))\\
   &\leq W_1(\Gamma_2(\Gamma_1(\mathcal{L}_1),\mathcal{L}_1),\Gamma_2(\Gamma_1(\mathcal{L}_2),\mathcal{L}_1))+W_1(\Gamma_2(\Gamma_1(\mathcal{L}_2),\mathcal{L}_1),\Gamma_2(\Gamma_1(\mathcal{L}_2),\mathcal{L}_2))\\
   &\leq (d_1d_2+d_3)W_1(\mathcal{L}_1,\mathcal{L}_2).
    \end{split}
\end{equation}
And since $d_1d_2+d_3\in[0,1)$, by the Banach fixed-point theorem, we conclude that there exists a unique fixed-point of $\Gamma$, or equivalently, a unique stationary MFG solution to \eqref{mfg}.

\section{Proof of Theorem \ref{conv_AIQL}}

The proof  of Theorem \ref{conv_AIQL} relies on the following lemmas. 

\begin{lemma}[\cite{softmax}]\label{softmax-lip}
The softmax function is $c$-Lipschitz, \textit{i.e.}, $\|\textbf{softmax}_c(x)-\textbf{softmax}_c(y)\|_2\leq c\|x-y\|_2$ for any $x,~y\in\mathbb{R}^n$.
\end{lemma}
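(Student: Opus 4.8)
The plan is to reconstruct the argument of \cite{softmax}: bound the $\ell_2\to\ell_2$ operator norm of the Jacobian of $\textbf{softmax}_c$ uniformly by $c$, and then conclude by the mean value inequality along the segment joining $x$ and $y$. Since each component of $\textbf{softmax}_c$ is a ratio of strictly positive smooth functions, $\textbf{softmax}_c$ is $C^\infty$ on all of $\mathbb{R}^n$, so this approach is legitimate.

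First I would compute the Jacobian. Setting $p:=\textbf{softmax}_c(x)$, a direct differentiation gives $\partial p_i/\partial x_j = c\,(p_i\delta_{ij}-p_ip_j)$, i.e. the Jacobian is $J(x)=c\,(\mathrm{diag}(p)-pp^\top)$. Then I would bound $\|\mathrm{diag}(p)-pp^\top\|_2\le 1$: the matrix $A:=\mathrm{diag}(p)-pp^\top$ is symmetric and positive semidefinite, since for any $v\in\mathbb{R}^n$ the quadratic form $v^\top A v = \sum_i p_i v_i^2 - \big(\sum_i p_i v_i\big)^2$ is the variance of the random variable equal to $v_i$ with probability $p_i$; and this variance is at most $\sum_i p_i v_i^2 \le \|v\|_\infty^2 \le \|v\|_2^2$, so every eigenvalue of $A$ lies in $[0,1]$ and $\|A\|_2\le 1$. (Equivalently, $\|A\|_2\le \mathrm{tr}(A)=1-\|p\|_2^2\le 1$ because $A\succeq 0$.) Hence $\|J(x)\|_2\le c$ for all $x$.

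Finally I would apply the fundamental theorem of calculus to the curve $\gamma(t)=y+t(x-y)$:
\begin{equation}
\textbf{softmax}_c(x)-\textbf{softmax}_c(y)=\int_0^1 J(\gamma(t))\,(x-y)\,dt,
\end{equation}
and take $\ell_2$ norms to obtain $\|\textbf{softmax}_c(x)-\textbf{softmax}_c(y)\|_2\le \big(\sup_{t\in[0,1]}\|J(\gamma(t))\|_2\big)\,\|x-y\|_2\le c\,\|x-y\|_2$. There is essentially no obstacle here; the only point needing care is to keep the norm consistent ($\ell_2$ on both sides), which is exactly why the relevant quantity is the spectral norm of $J$, and the positive-semidefinite ``covariance'' structure of $\mathrm{diag}(p)-pp^\top$ makes that bound immediate rather than requiring any delicate estimate.
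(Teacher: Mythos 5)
Your proof is correct: the Jacobian computation $J(x)=c\,(\mathrm{diag}(p)-pp^\top)$, the spectral-norm bound via the positive-semidefinite covariance structure, and the mean-value/FTC step are all valid, and this is exactly the standard argument of the cited reference \cite{softmax}. The paper itself does not prove this lemma but simply imports it by citation, so your reconstruction fills in the omitted (and essentially canonical) proof rather than diverging from it.
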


{\color{black} Notice that for a finite set $\mathcal{X}\subseteq\mathbb{R}^k$ and any two (discrete) distributions $\nu,~\nu'$ over $\mathcal{X}$, we have
\begin{equation}
\begin{split}
W_1(\nu,\nu')&\leq \text{diam}(\mathcal{X})d_{TV}(\nu,\nu')=\frac{\text{diam}(\mathcal{X})}{2}\|\nu-\nu'\|_1\leq \frac{\text{diam}(\mathcal{X})}{2}\|\nu-\nu'\|_2,
\end{split}
\end{equation}
where in computing the $\ell_1$-norm, $\nu,~\nu'$ are viewed as vectors of length $|\mathcal{X}|$. 

Hence Lemma \ref{softmax-lip} implies that for any $x,~y\in\mathbb{R}^{|\mathcal{X}|}$, when $\textbf{softmax}_c(x)$ and $\textbf{softmax}_c(y)$ are viewed as probability distributions over $\mathcal{X}$, we have 
\[
W_1(\textbf{softmax}_c(x),\textbf{softmax}_c(y))\leq \frac{\text{diam}(\mathcal{X})c}{2}\|x-y\|_2\leq \frac{\text{diam}(\mathcal{X})\sqrt{|\mathcal{X}|}c}{2}\|x-y\|_{\infty}.
\]}

\begin{lemma}\label{soft-arg-diff}
The distance between the softmax and the argmax mapping is bounded by
\[
\|\textbf{softmax}_c(x)-\textbf{argmax-e}(x)\|_2\leq 2n\exp(-c\delta),
\]
where {\color{black}$\delta=x_{\max}-\max_{x_j<x_{\max}}x_j$, $x_{\max}=\max_{i=1,\dots,n}x_i$, and}  $\delta:=\infty$ when all $x_j$ are equal.
\end{lemma}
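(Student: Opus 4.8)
Let me sketch how I would prove Lemma~\ref{soft-arg-diff}. The plan is to split the coordinates according to whether they attain the maximum. Let $x_{\max}=\max_{i}x_i$ and let $k=|\{i:x_i=x_{\max}\}|\geq 1$ be the number of maximizers, so that $\textbf{argmax-e}(x)$ equals $1/k$ on each maximizing coordinate and $0$ elsewhere. If all $x_j$ are equal then $k=n$, both $\textbf{softmax}_c(x)$ and $\textbf{argmax-e}(x)$ equal the uniform vector, the left-hand side is $0$, and the bound holds vacuously since $\delta:=\infty$; so from now on assume $k<n$ and $\delta\in(0,\infty)$.

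First I would control the normalizing constant $Z:=\sum_{j=1}^n\exp(cx_j)$ from below by $Z\geq k\exp(cx_{\max})$, and the ``sub-maximal tail'' $R:=\sum_{j:\,x_j<x_{\max}}\exp(cx_j)$ from above by $R\leq (n-k)\exp(c(x_{\max}-\delta))=(n-k)\exp(cx_{\max})\exp(-c\delta)$, using the very definition of $\delta$ as the gap between $x_{\max}$ and the largest strictly smaller entry. Then for a maximizing coordinate $i$ I would write $\textbf{softmax}_c(x)_i=\exp(cx_{\max})/(k\exp(cx_{\max})+R)$ and estimate $|\textbf{softmax}_c(x)_i-1/k|=R/\big(k(k\exp(cx_{\max})+R)\big)\leq R/(k^2\exp(cx_{\max}))\leq (n-k)\exp(-c\delta)/k^2$. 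For a non-maximizing coordinate $j$ I would bound $\textbf{softmax}_c(x)_j=\exp(cx_j)/Z\leq \exp(-c\delta)/k$ directly, while $\textbf{argmax-e}(x)_j=0$.

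Finally, summing the absolute coordinate-wise differences and using $\|v\|_2\leq\|v\|_1$ gives $\|\textbf{softmax}_c(x)-\textbf{argmax-e}(x)\|_2\leq k\cdot(n-k)\exp(-c\delta)/k^2+(n-k)\cdot\exp(-c\delta)/k=2(n-k)\exp(-c\delta)/k\leq 2(n-1)\exp(-c\delta)\leq 2n\exp(-c\delta)$, which is the claim. There is no genuine obstacle here: the argument is entirely elementary. The only points needing care are treating the degenerate all-equal case separately so that the convention $\delta:=\infty$ is handled correctly, and keeping the constant at $2n$ rather than something larger, which is why I pass through the $\ell_1$ norm and use the crude inequality $(n-k)/k\leq n-1$ (valid since $k\geq 1$) instead of optimizing the per-coordinate estimates.
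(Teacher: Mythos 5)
Your proof is correct and follows essentially the same route as the paper's: both split coordinates into maximizers and non-maximizers, bound the $\ell_2$ distance by the $\ell_1$ distance, and use $Z\geq k e^{cx_{\max}}$ together with the gap $\delta$ to control the sub-maximal terms (the paper simply computes the $\ell_1$ distance exactly as $2R/(ke^{cx_{\max}}+R)$ before bounding, rather than bounding each coordinate block separately). No gaps.
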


{\color{black}
Similar to Lemma \ref{softmax-lip}, Lemma \ref{soft-arg-diff} implies that 
for any $x\in\mathbb{R}^{|\mathcal{X}|}$, viewing $\textbf{softmax}_c(x)$  as probability distributions over $\mathcal{X}$ leads to  
\[
W_1(\textbf{softmax}_c(x),\textbf{argmax-e}(x))\leq \text{diam}(\mathcal{X})|\mathcal{X}|\exp(-c\delta).
\]

\begin{proof}[Proof of Lemma~\ref{soft-arg-diff}]
Without loss of generality, assume that $x_1=x_2=\dots= x_m=\max_{i=1,\dots,n}x_i=x^\star>x_j$ for all $m<j\leq n$. Then \[
\textbf{argmax-e}(x)_i=
\begin{cases}
\frac{1}{m}, & i \leq m,\\
0, & otherwise.
\end{cases}
\]
\[
\textbf{softmax}_c(x)_i=
\begin{cases}
\frac{e^{cx^\star}}{me^{cx^\star}+\sum_{j=m+1}^ne^{cx_j}}, & i\leq m, \\
\frac{e^{cx_i}}{me^{cx^\star}+\sum_{j=m+1}^ne^{cx_j}}, & otherwise.
\end{cases}
\]
Therefore 
\begin{equation*}
\begin{split}
\|\textbf{soft}&\textbf{max}_c(x)-\textbf{argmax-e}(x)\|_2\leq \|\textbf{softmax}_c(x)-\textbf{argmax-e}(x)\|_1\\
=&m\left(\frac{1}{m}-\frac{e^{cx^\star}}{me^{cx^\star}+\sum_{j=m+1}^ne^{cx_j}}\right)+\frac{\sum_{i=m+1}^ne^{cx_i}}{me^{cx^\star}+\sum_{j=m+1}^ne^{cx_j}}\\
=& \frac{2\sum_{i=m+1}^ne^{cx_i}}{me^{cx^\star}+\sum_{i=m+1}^ne^{cx_i}}= \frac{2\sum_{i=m+1}^ne^{-c\delta_i}}{m+\sum_{i=m+1}^ne^{-c\delta_i}}\\
\leq &\frac{2}{m}\sum_{i=m+1}^ne^{-c\delta_i}\leq \frac{2(n-m)}{m}e^{-c\delta}\leq  2ne^{-c\delta},
\end{split}
\end{equation*}
with $\delta_i=x_i-x^\star$.
\end{proof}


\begin{lemma}[\cite{Q-rate}]\label{Q-finite-bd} For an MDP, say $\mathcal{M}$, suppose that the Q-learning algorithm takes step-sizes 
\[
\beta_t(s,a)=
\begin{cases}
|\#(s,a,t)+1|^{-h}, & (s,a)=(s_t,a_t),\\
0, & \text{otherwise}.
\end{cases}
\]
with $h\in(1/2,1)$. Here $\#(s,a,t)$ is the number of times up to time $t$ that one visits the state-action pair $(s,a)$. Also suppose that the covering time of the state-action pairs is bounded by $L$ with probability at least {\color{black}$1-p$ for some $p\in(0,1)$}. Then $\|Q_{T^{\mathcal{M}}(\delta,\epsilon)}-Q^\star\|_{\infty}\leq \epsilon$  with probability at least $1-2\delta$. Here $Q_T$ is the $T$-th update in Q-learning, and $Q^\star$ is the (optimal) Q-function, given that
\[
\begin{split}
T^{\mathcal{M}}(\delta,&\epsilon)=\Omega\left(\left(\dfrac{L\log_p(\delta)}{\beta}\log\dfrac{V_{\max}}{\epsilon}\right)^{\frac{1}{1-h}}+\left(\dfrac{\left(L\log_p(\delta)\right)^{1+3h}V_{\max}^2\log\left(\frac{|\mathcal{S}||\mathcal{A}|V_{\max}}{\delta\beta\epsilon}\right)}{\beta^2\epsilon^2}\right)^{\frac{1}{h}}\right),
\end{split}
\]
where $\beta=(1-\gamma)/2$, $V_{\max}=R_{\max}/(1-\gamma)$, and $R_{\max}$ is an upper bound on the extreme difference between the expected rewards, \textit{i.e.}, $\max_{s,a,\mu}{r}(s,a,\mu)-\min_{s,a,\mu}{r}(s,a,\mu)\leq R_{\max}$.
\end{lemma}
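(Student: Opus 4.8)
\section*{Proof proposal for Lemma \ref{Q-finite-bd}}

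The plan is to reproduce the finite-time analysis of asynchronous Q-learning with polynomial step sizes from \cite{Q-rate}. The starting point is to track the error $\Delta_t(s,a):=Q_t(s,a)-Q^\star(s,a)$ and to exploit that $Q^\star$ is the unique fixed point of the Bellman optimality operator $\mathcal{T}$, a $\gamma$-contraction in $\|\cdot\|_\infty$. Subtracting the fixed-point relation $Q^\star=\mathcal{T}Q^\star$ from the update \eqref{Q-learning} (applied at the visited pair $(s_t,a_t)$) and setting
\[
\eta_t = r(s_t,a_t) + \gamma\max_{\tilde a}Q^\star(s_{t+1},\tilde a) - (\mathcal{T}Q^\star)(s_t,a_t),
\]
I would decompose the increment into a contractive part of magnitude at most $\gamma\|\Delta_t\|_\infty$ and the zero-mean noise $\eta_t$, which satisfies $\mathbb{E}[\eta_t\mid\mathcal{F}_t]=0$ and $|\eta_t|\le V_{\max}$. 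This yields
\[
\Delta_{t+1}(s,a) = (1-\beta_t(s,a))\Delta_t(s,a) + \beta_t(s,a)\Bigl(\gamma\bigl[\max_{\tilde a}Q_t(s_{t+1},\tilde a)-\max_{\tilde a}Q^\star(s_{t+1},\tilde a)\bigr] + \eta_t\Bigr).
\]

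Following \cite{Q-rate}, the next step is to split $\Delta_t$ into two auxiliary processes driven by the same step sizes: a \emph{contraction} process $Y_t$ that absorbs the $\gamma\|\Delta_t\|_\infty$ term, and a \emph{noise} process $W_t$ obeying $W_{t+1}(s,a)=(1-\beta_t(s,a))W_t(s,a)+\beta_t(s,a)\eta_t$, with the comparison $\|\Delta_t\|_\infty \le \|Y_t\|_\infty + \|W_t\|_\infty$. The process $W_t$ is a step-size-weighted martingale average of the bounded noise, so I would bound it by an Azuma--Hoeffding concentration inequality, controlling its effective variance through the sum of squared step sizes accumulated by each pair over a window of updates. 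The process $Y_t$ contracts geometrically: once every pair has been updated enough times, $\|Y_t\|_\infty$ shrinks by a factor close to $\gamma$.

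The core is an iterative phase argument. Define thresholds $D_0=V_{\max}$ and $D_{k+1}=\tfrac{1+\gamma}{2}D_k$ so that $\beta=(1-\gamma)/2$ is the per-phase margin, and prove inductively that $\|\Delta_t\|_\infty\le D_k$ for all $t$ beyond the end of phase $k$. A phase closes only after every state-action pair has been visited enough times; the \emph{covering time} bound $L$ (holding with probability $1-p$, boosted by chaining over the phases, which produces the $\log_p(\delta)$ factors) guarantees this within a controlled number of real time steps. Matching the two requirements within each phase --- enough updates for the contraction, which through the polynomial schedule yields the exponent $\tfrac{1}{1-h}$ and the first term of $T^{\mathcal{M}}$, and enough samples for the noise average to drop below the current threshold, which yields the factor $V_{\max}^2/(\beta^2\epsilon^2)$, the union-bound logarithm, and the exponent $\tfrac{1}{h}$ in the second term --- and summing the phase lengths gives $T^{\mathcal{M}}(\delta,\epsilon)$; the two $\delta$-failure events (covering time and noise concentration) combine to the claimed $1-2\delta$.

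The main obstacle is the simultaneous control of the martingale $W_t$ under the \emph{time-varying, data-dependent} polynomial step sizes $\beta_t(s,a)=(\#(s,a,t)+1)^{-h}$ in the \emph{asynchronous} setting, where distinct pairs update at distinct random times. One must convert the per-phase visit counts $\#(s,a,\cdot)$ into elapsed real time via the covering time $L$, and show that the effective variance of $W_t$ over a phase scales like the sum of squared step sizes, which for $h\in(1/2,1)$ is small enough to be driven below each threshold while keeping the phase length polynomial. Balancing the exponent $h$ against both the contraction rate and the concentration rate is precisely what produces the two competing terms, and is the delicate heart of the estimate.
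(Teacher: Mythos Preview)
The paper does not give its own proof of this lemma: it is stated with the citation \cite{Q-rate} (Even-Dar and Mansour's finite-time Q-learning analysis) and used as a black box in the proof of Theorem~\ref{conv_AIQL}. Your proposal is a faithful sketch of the argument in that reference --- the $Y_t/W_t$ decomposition, the geometric threshold sequence $D_{k+1}=\tfrac{1+\gamma}{2}D_k$, the covering-time-to-real-time conversion, and the balance between the contraction exponent $1/(1-h)$ and the concentration exponent $1/h$ --- so there is nothing to compare against in the present paper.
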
 

Here the covering time {\color{black}$L$} of a state-action pair sequence is defined to be the number of steps needed to visit all state-action pairs starting from any arbitrary state-action pair, and  {\color{black} $T^{\mathcal{M}}(\delta,\epsilon)$ is the number of inner iterations $T_k$ set in Algorithm~\ref{AIQL_MFG}. This will guarantee the convergence in Theorem  \ref{conv_AIQL}.}
{\color{black}Also notice that the $l_{\infty}$ norm above is defined in an element-wise sense, \textit{i.e.}, for $M\in\mathbb{R}^{|\mathcal{S}|\times|\mathcal{A}|}$, we have $\|M\|_\infty=\max_{s\in\mathcal{S},a\in\mathcal{A}}|M(s,a)|$.}

\begin{proof}[Proof of Theorem \ref{conv_AIQL}]
Define $\hat{\Gamma}_1^k(\mathcal{L}_k):=\textbf{softmax}_c\left(\hat{Q}^\star_{\mathcal{L}_k}\right)$. In the following, $\pi=\textbf{softmax}_c(Q_{\mathcal{L}})$ is understood as the policy $\pi$ with $\pi(s)=\textbf{softmax}_c(Q_{\mathcal{L}}(s,\cdot))$. Let $\mathcal{L}^\star$ be the population state-action pair in a stationary NE of \eqref{mfg}. Then $\pi_k=\hat{\Gamma}_1^k(\mathcal{L}_k)$.
Denoting $d:=d_1d_2+d_3$,  we see 
\begin{equation*}
    \begin{split}
        W_1(\tilde{\mathcal{L}}_{k+1}&,\mathcal{L}^\star)=W_1(\Gamma_2(\pi_k,\mathcal{L}_k),\Gamma_2(\Gamma_1(\mathcal{L}^\star),\mathcal{L}^\star))\\
        \leq& W_1(\Gamma_2(\Gamma_1(\mathcal{L}_k),\mathcal{L}_k),\Gamma_2(\Gamma_1(\mathcal{L}^\star),\mathcal{L}^\star))+W_1(\Gamma_2(\Gamma_1(\mathcal{L}_k),\mathcal{L}_k),\Gamma_2(\hat{\Gamma}_1^k(\mathcal{L}_k),\mathcal{L}_k))\\
        \leq& W_1(\Gamma(\mathcal{L}_k),\Gamma(\mathcal{L}^\star))+d_2D(\Gamma_1(\mathcal{L}_k),\hat{\Gamma}_1^k(\mathcal{L}_k))\\
        \leq & (d_1d_2+d_3)W_1(\mathcal{L}_k,\mathcal{L}^\star)+d_2D(\textbf{argmax-e}(Q_{\mathcal{L}_k}^\star),\textbf{softmax}_c(\hat{Q}_{\mathcal{L}_k}^\star))\\
        \leq& dW_1(\mathcal{L}_k,\mathcal{L}^\star)+d_2D(\textbf{softmax}_c(\hat{Q}_{\mathcal{L}_k}^\star),\textbf{softmax}_c(Q_{\mathcal{L}_k}^\star))\\
        &+d_2D(\textbf{argmax-e}(Q_{\mathcal{L}_k}^\star),\textbf{softmax}_c(Q_{\mathcal{L}_k}^\star))\\
        \leq & dW_1(\mathcal{L}_k,\mathcal{L}^\star)+\frac{cd_2\text{diam}(\mathcal{A})\sqrt{|\mathcal{A}|}}{2}\|\hat{Q}_{\mu_k}^\star-Q_{\mu_k}^\star\|_{\infty}\\
        &+d_2D(\textbf{argmax-e}(Q_{\mathcal{L}_k}^\star),\textbf{softmax}_c(Q_{\mathcal{L}_k}^\star)).
    \end{split}
\end{equation*}
Then since $\mathcal{L}_k\in S_{\epsilon}$ by the projection step, Lemma \ref{soft-arg-diff}, and Lemma \ref{Q-finite-bd} with the choice of $T_k=T^{\mathcal{M}_\mu}(\delta_k,\epsilon_k)$),  we have,  with probability at least $1-2\delta_k$, 
\begin{equation}
W_1(\tilde{\mathcal{L}}_{k+1},\mathcal{L}^\star)\leq  dW_1(\mathcal{L}_k,\mathcal{L}^\star)+\frac{cd_2\text{diam}(\mathcal{A})\sqrt{|\mathcal{A}|}}{2}\epsilon_k+d_2\text{diam}(\mathcal{A})|\mathcal{A}|e^{-c\phi(\epsilon)}.
\end{equation}

Finally, it is clear that with probability at least $1-2\delta_k$,
\begin{equation*}
    \begin{split}
   W_1(\mathcal{L}_{k+1},\mathcal{L}^\star)&\leq W_1(\tilde{\mathcal{L}}_{k+1},\mathcal{L}^\star)+ W_1(\tilde{\mathcal{L}}_{k+1},\textbf{Proj}_{S_{\epsilon}}(\tilde{\mathcal{L}}_{k+1})) \\
    &\leq dW_1(\mathcal{L}_k,\mathcal{L}^\star)+\frac{cd_2\text{diam}(\mathcal{A})\sqrt{|\mathcal{A}|}}{2}\epsilon_k+d_2\text{diam}(\mathcal{A})|\mathcal{A}|e^{-c\phi(\epsilon)}+\epsilon.
    \end{split}
\end{equation*}
By telescoping, this implies that with probability at least $1-2\sum_{k=0}^{{\color{black}K-1}}\delta_k$, 
\begin{equation}
\begin{split}
W_1(\mathcal{L}_K, \mathcal{L}^\star)\leq &d^KW_1(\mathcal{L}_0,\mathcal{L}^\star)+\frac{cd_2\text{diam}(\mathcal{A})\sqrt{|\mathcal{A}|}}{2}\sum_{k=0}^{K-1}d^{K-k}\epsilon_k \\
&+\dfrac{(d_2\text{diam}(\mathcal{A})|\mathcal{A}|e^{-c\phi(\epsilon)}+\epsilon)(1-d^{\color{black}K})}{1-d}.
\end{split}
\end{equation}

Since $\epsilon_k$ is summable,  hence $\sup_{k\geq 0}\epsilon_k<\infty$, 
$\sum_{k=0}^{K-1}d^{K-k}\epsilon_k\leq \dfrac{\sup_{k\geq 0}\epsilon_k}{1-d}d^{\lfloor(K-1)/2\rfloor}+\sum_{k=\lceil(K-1)/2\rceil}^{\infty}\epsilon_k$.

Now plugging in $K=K_{\epsilon,\eta}$, with the choice of $\delta_k$ and $c=\frac{\log(1/\epsilon)}{\phi(\epsilon)}$, and noticing that $d\in[0,1)$, it is clear that  with probability at least $1-2\delta$, 
\begin{equation}
\begin{split}
W_1(\mathcal{L}_{K_{\epsilon,\eta}},\mathcal{L}^\star)\leq &d^{K_{\epsilon,\eta}}W_1(\mathcal{L}_0,\mathcal{L}^\star)\\
&+\frac{cd_2\text{diam}(\mathcal{A})\sqrt{|\mathcal{A}|}}{2}\left( \dfrac{\sup_{k\geq 0}\epsilon_k}{1-d}d^{\lfloor(K_{\epsilon,\eta}-1)/2\rfloor}+\sum_{k=\lceil(K_{\epsilon,\eta}-1)/2\rceil}^{\infty}\epsilon_k\right)\\
&+\dfrac{(d_2\text{diam}(\mathcal{A})|\mathcal{A}|+1)\epsilon}{1-d}.
\end{split}
\end{equation}

Setting $\epsilon_k=(k+1)^{-(1+\eta)}$, then when $K_{\epsilon,\eta}\geq 2(\log_d(\epsilon{\color{black}/c})+1)$, 
\[
 \dfrac{\sup_{k\geq 0}\epsilon_k}{1-d}d^{\lfloor(K_{\epsilon,\eta}-1)/2\rfloor}\leq \frac{\epsilon{\color{black}/c}}{1-d}.
\] 
Similarly, when $K_{\epsilon,\eta}\geq 2(\eta\epsilon{\color{black}/c})^{-1/\eta}$,  $\sum_{k=\left\lceil\frac{K_{\epsilon,\eta}-1}{2}\right\rceil}^{\infty}\epsilon_k\leq \epsilon{\color{black}/c}$.

Finally, when $K_{\epsilon,\eta}\geq \log_d(\epsilon/(\text{diam}(\mathcal{S})\text{diam}(\mathcal{A})))$, 
$d^{K_{\epsilon,\eta}}W_1(\mathcal{L}_0,\mathcal{L}^\star)\leq \epsilon$,
since $W_1(\mathcal{L}_0,\mathcal{L}^\star)\leq \text{diam}(\mathcal{S}\times\mathcal{A}){\color{black}=\text{diam}(\mathcal{S})\text{diam}(\mathcal{A})}$. 

In summary, if $K_{\epsilon,\eta}=\lceil 2\max\{(\eta\epsilon{\color{black}/c})^{-1/\eta}$, $\log_d(\epsilon/\max\{\text{diam}(\mathcal{S})\text{diam}(\mathcal{A}),{\color{black}c}\})+1\}\rceil$, then with probability at least $1-2\delta$,  
\[
\begin{split}
&W_1(\mathcal{L}_{K_{\epsilon,\eta}},\mathcal{L}^\star)\leq \left(1+\frac{d_2\text{diam}(\mathcal{A})\sqrt{|\mathcal{A}|}(2-d)}{2(1-d)}+\dfrac{(d_2\text{diam}(\mathcal{A})|\mathcal{A}|+1)}{1-d}\right)\epsilon=O(\epsilon).
\end{split}
\]

Finally, plugging in $\epsilon_k$ and $\delta_k$ into $T^{\mathcal{M}_L}(\delta_k,\epsilon_k)$, and noticing that $k {\color{black} \leq }  K_{\epsilon,\eta}$ and $\sum_{k=0}^{K_{\epsilon,\eta}-1}(k+1)^{\alpha}\leq \frac{K_{\epsilon,\eta}^{\alpha+1}}{\alpha+1}$, we immediately arrive at 
\begin{equation*}\label{Tbound_complex}
\begin{split}
T= O&\left(\left(\log(K_{\epsilon,\eta}/\delta)\right)^{\frac{1}{1-h}}K_{\epsilon,\eta}\left(\log K_{\epsilon,\eta}\right)^{\frac{1}{1-h}}
+\left(\log(K_{\epsilon,\eta}/\delta)\right)^{\frac{1}{h}+3}\frac{K_{\epsilon,\eta}^{1+\frac{2(1+\eta)}{h}}}{1+\frac{2(1+\eta)}{h}}\left(\log (K_{\epsilon,\eta}/\delta)\right)^{\frac{1}{h}}\right).
\end{split}
\end{equation*}
By further relaxing $\eta$ to $1$ and merging the terms, (\ref{Tbound}) follows.
\end{proof}

\section{Naive algorithm}

 The Naive iterative algorithm (Algorithm \ref{AQL_MFG}) is to  replace Step A in the three-step fixed-point approach of GMFGs with Q-learning iterations. The limitation of this Naive algorithm has been discussed in the main text {\color{black}(Step 1, Section \ref{AIQL})} and {\color{black}empirically verified in Section \ref{experiments}} (Figure \ref{fig:naive}). 
\begin{algorithm}
  \caption{\textbf{Alternating Q-learning for GMFGs (Naive)}}
  \label{AQL_MFG}
\begin{algorithmic}[1]
  \STATE \textbf{Input}: Initial population state-action pair $L_0$
 \FOR {$k=0, 1, \cdots$}
  \STATE Perform Q-learning to find the Q-function $Q_k^\star(s,a)=Q_{L_k}^\star(s,a)$ of an MDP with dynamics $P_{L_k}(s'|s,a)$ and rewards $r_{L_k}(s,a)$.
  \STATE Solve $\pi_k\in \Pi$ with $\pi_k(s)=\textbf{argmax-e}\left(Q_k^\star(s,\cdot)\right)$.
  \STATE Sample $s\sim \mu_k$, where $\mu_k$ is the population state marginal of $L_k$, and obtain $L_{k+1}$ from $\mathcal{G}(s,\pi_k,L_k)$.
\ENDFOR
\end{algorithmic}
\end{algorithm}

\section{GMF-V}\label{GMF-V}
GMF-V, briefly mentioned in Section \ref{AIQL}, is the value-iteration version of our  main algorithm GMF-Q. GMF-V applies to the GMFG setting with fully known transition dynamics $P$ and rewards $r$.

\begin{algorithm}[H]
  \caption{\textbf{Value Iteration for GMFGs (GMF-V)}}
  \label{VI}
\begin{algorithmic}[1]
  \STATE \textbf{Input}: Initial $L_0$, tolerance $\epsilon>0$.
 \FOR {$k=0, 1, \cdots$}
  \STATE Perform value iteration for $T_k$ iterations to find the approximate Q-function $Q_{L_k}$ and value function $V_{L_k}$: 
   \FOR {$t=1,2,\cdots,T_k$}
   \FOR {all $s \in \mathcal{S}$ and $s \in \mathcal{A}$} 
   \STATE $ Q_{L_k}(s,a) \leftarrow \mathbb{E}[r(s,a,L_k)]+\gamma \sum_{s^{\prime}}P(s^{\prime}|s,a,L_k)V_{L_k}(s^{\prime})$
   \STATE $V_{L_k}(s)\leftarrow\max_a Q_{L_k}(s,a)$
   
   \ENDFOR
   \ENDFOR
  
  \STATE Compute a policy $\pi_k\in\Pi$: \\ ${ \hspace{1cm}\pi_k(s)=\textbf{softmax}_c({Q}_{L_k}(s,\cdot))}$.
  \STATE Sample $s\sim \mu_k$, where $\mu_k$ is the population state marginal of $L_k$, and obtain $\tilde{L}_{k+1}$ from $\mathcal{G}(s,\pi_k,L_k)$.
  \STATE Find $L_{k+1}=\textbf{Proj}_{S_{\epsilon}}(\tilde{L}_{k+1})$
\ENDFOR
\end{algorithmic}
\end{algorithm}

\section{More details for the experiments}
\subsection{Competition intensity index $M$.}\label{intensity}
In the experiment, the competition index $M$ is interpreted and implemented as the number of selected players in each auction competition. That is, in each round, $M-1$ players will be randomly
 selected from the population to compete with the {\it representative} advertiser for the auction. Therefore, the population distribution $\mathcal{L}_t$, the winner indicator {\color{black}$w_t^M$}, and second-best price $a_t^M$ all depend on $M$. This parameter $M$ is also referred to as the {\it auction thickness} in the auction literature \cite{IJS2011}.
\subsection{Adjustment for Algorithm MF-Q.}
 For MF-Q, \cite{YLLZZW2018} assumes all $N$ players have a joint state $s$. In the auction experiment, we make the following adjustment for MF-Q for computational efficiency and model comparability: each player $i$ makes decision based on her own private state and table $Q^i$ is a functional of $s^i$, $a^i$ and $\frac{\sum_{j \neq i}a^j}{N-1}$.


}
\end{document}